\newtheorem{lemma}{Lemma}[section]
\newtheorem{proposition}{Proposition}[section]
\newtheorem{theorem}{Theorem}[section]
\newtheorem{definition}{{Definition}}[section]
\title{Constructive Approach of the Solution of Riemann
  Problem for Shallow Water Equations with Topography and
  Vegetation}
\author{Stelian Ion$^*$, Dorin Marinescu$^*$, Stefan-Gicu
  Cruceanu\footnote{`Gheorghe Mihoc-Caius Iacob'' Institute
    of Mathematical Statistics and Applied Mathematics,
    Romanian Academy, 050711 Bucharest, Romania, {\tt emails:
      ro\_diff@yahoo.com, marinescu.dorin@ismma.ro,
      gcruceanu@ismma.ro}.  \protect\newline This work was
    partially supported by a grant of the Ministry of
    Research and Innovation, CCCDI-UEFISCDI, project number
    PN-III-P1-1.2-PCCDI-2017-0721/34PCCDI/2018, within PNCDI
    III.}}
\date{}
\begin{document}
\maketitle

\begin{abstract}
  We investigate the Riemann Problem for a shallow water
  model with porosity and terrain data.  Based on recent
  results on the local existence, we build the solution in
  the large settings (the magnitude of the jump in the
  initial data is not supposed to be ``small enough'').  One
  difficulty for the extended solution arises from the
  double degeneracy of the hyperbolic system describing the
  model.  Another difficulty is given by the fact that the
  construction of the solution assumes solving an equation
  which has no global solution.  Finally, we present some
  cases to illustrate the existence and non-existence of the
  solution.\\
  {\bf Keywords:} hyperbolic nonconservative law, dam-break,
  elementary waves, composite waves.\\
  {\bf 2010 MSC:} Primary 35L02; Secondary 35L67, 35Q92.
\end{abstract}

\def\jumpp#1{\left|\left|#1\right|\right|}

\def\grad#1{{\rm grad}\,#1}
\def\diffp#1#2{\displaystyle\frac{\partial #1}{\partial #2}}
\def\jump#1{\left[\left|#1\right|\right]}
\def\jumpp#1{\left\lfloor\left|#1\right|\right\rfloor}

\section{Introduction}
Riemann Problem is a classical topic in the theory of
hyperbolic systems, \cite{lax, glimm, lefloch, smoller} and
it is widely used in testing or elaborating numerical
schemes, \cite{toro}, to refer to just a few classical
papers.  The study of Riemann Problem for non-conservative
hyperbolic systems requires a new concept on the definition
of the discontinuous solution.  In this sense, two main
ingredients are introduced: {\it measure solution},
\cite{dalmaso} and {\it path connection},
\cite{path1-lefloch, path2-lefloch, volpert}.  Using a path
connection, one can define the Rankine-Hugoniot relations
that relate the two side values of a shock solution on the
discontinuity curve.  The shock solutions depend on the path
connection.

The shallow water equations with topography and vegetation
is a widely used mathematical model in environmental
sciences to study the flow of water into natural systems,
\cite{delestre, sdsai_ovidius, nepf}.  The model fits into
the class of non-conservative hyperbolic systems, where
there are known several formulations for jump relations,
\cite{cozzolin1, cozzolin2, guinot, leflochrp}.

In \cite{sds_ovidius}, we have introduced the jump relations
using a class of path connections that was chosen on the
basis of certain physical arguments.

In this paper, we investigate the existence of the solution
in the ``large'': the Riemann Problem data are not
restricted to be ``closed enough''.  The ``constructive''
word from the title must be interpreted as follows.  First,
a problem being given, there is a way to affirm that the
problem has or has not a solution.  In case of an
affirmative answer, there is an algorithm that allows one to
build the solution.  Secondly, we do not have results that
can give general conditions for the existence of the
solution.

For an easier understanding of the results, we briefly
recall the shallow water model with topography and
vegetation.  For more details, the readers are referred to
the papers \cite{cozzolin1, cozzolin2, guinot, sds_ovidius}.

In the absence of the friction terms and if there are not
water gain or loss, the 1D shallow water equations with
topography and vegetation can be written as
\begin{equation}
  \label{swe.01}
  \begin{split}
    \displaystyle\frac{\partial}{\partial t}\theta h+\partial_x\left(\theta h u\right)&=0,\\
    \diffp{}{t}\theta hu+\partial_x \left(\theta hu^2\right)+\theta h\partial_x w&=0,
  \end{split}
\end{equation}
where $h(t,{x})$ is the water height, $u(t,{x})$ the water
velocity and $z(x)$ the soil surface level.  The function
$w=g(z+h)$ stands for the potential of the water level and
$g$ is gravitational acceleration.  The variation of the
cover plant density is taken into account through the
function $\theta({x})$, the porosity of the plant cover.

The Riemann problem for the shallow water equations with
topography and vegetation consists in finding a solution in
the class of functions with bounded variation for the
equations (\ref{swe.01}) with the following initial
conditions
\begin{equation}
  \label{srp-02}
  (h,u)_{t=0}=\left\{
    \begin{array}{ll}
      (h^L, u^L),&x<0,\\
      (h^R, u^R),&x>0.
    \end{array}
  \right.
\end{equation}
The terrain data (the porosity $\theta$ and the soil
surface $z$) are defined by
\begin{equation}
  \label{srp-03}
  (\theta, z)=\left\{
    \begin{array}{ll}
      (\theta^L, z^L),&x<0,\\
      (\theta^R, z^R),&x>0.
    \end{array}
  \right.
\end{equation}

\section{The Riemann Problem for arbitrary data}

A solution of the problem is built  by using rarefaction
waves and shock waves.  The rarefaction waves are smooth
solutions of (\ref{swe.01}) in a domain where the terrain
functions are constant.

For a function $\Psi$, $\jump{\Psi}$ stands for the jump
$\Psi^{+}-\Psi^{-}$.
The shock wave solutions verify the classical
Rankine-Hugoniot relations:
\begin{equation}
  \label{swe.08}
  \begin{split}
    -\sigma\jump{h}+\jump{hu}&=0,\\
    -\sigma\jump{hu}+\jump{hu^2+g{h^2}/{2}}&=0,\\
  \end{split}
\end{equation}
in the domains $x<0$ or $x>0$ or generalized Rankine-Hugoniot relations
\begin{equation}
  \label{swe.10}
  \begin{split}
    \jump{\theta hu}&=0,\\
    \jump{\theta hu^2}+g
    \displaystyle\int\limits_0^1\theta(s)h(s)\displaystyle\frac{d(z(s)+h(s))}{{\rm d}s}{\rm d}s&=0,\\
  \end{split}
\end{equation}
for a steady shock located at $x=0$.  The integral is
evaluated on a path connection curve
$\{\theta(s; \theta^L, \theta^R), z(s; z^L, z^R), h(s; h^L,
h^R)\}$.

\subsection{Riemann Constructor. $(z, \theta)$ constant function} 
Whenever the terrain function are constant the Riemann
problem can be solved by using the two kind of the waves,
rarefaction waves and shock waves.  In the phase space
$(h,u)$ one defines a 1-wave curve {\it issuing from a point
  $(h_L,u_L)$}
\[
W_1(h; h_L,u_L):=\{(h,u_1(h;h_L,u_L))|h>0\}   
\]
where  
\begin{equation}
  \label{rpl_eq.1}
  u_1(h;h_L,u_L)=
  \left\{
    \begin{array}{ll}
      u_L+2\sqrt{gh_L}\left(1-\sqrt{\displaystyle\frac{h}{h_L}}\right),&h<h_L,\\
      u_L+\sqrt{gh_L}\left(1-{\displaystyle\frac{h}{h_L}}\right)
      \sqrt{\displaystyle\frac{1}{2}\left(1+\displaystyle\frac{h_L}{h}
      \right)},&h>h_L 
    \end{array}
  \right.
\end{equation}
and 2-backward wave curve {\it reaching a point $(h_R,u_R)$} 
\[ W^B_2(h;h_R,u_R):=\{(h,u_2^B(h;h_R,u_R)|h>0\},\] where
\begin{equation}
  \label{rpl_eq.2}
  u^{B}_2(h;h_R,u_R)=
  \left\{
    \begin{array}{ll}
      u_R-2\sqrt{gh_R}\left(1-\sqrt{\displaystyle\frac{h}{h_R}}\right),&h<h_R,
      \\u_R-\sqrt{gh_R}\left(1-{\displaystyle\frac{h}{h_R}}\right)
      \sqrt{\displaystyle\frac{1}{2}\left(1+\displaystyle\frac{h_R}{h}\right)},&h>h_R. 
    \end{array}
  \right.
\end{equation}
The interpretation of the two wave curves is as follows: 

(a) $W_1(h; h_L,u_L)$.  A point $(h_L,u_L)$ being given as
the left state in the Riemann problem, the curve
$W_1(h; h_L,u_L)$ defines all right states that can be
connected to the left state either by a 1--shock wave,
$h>h_L$ or by a 1--rarefaction wave, $h<h_L$.

(b) $W^B_2(h;h_R,u_R)$.  A point $(h_R,u_R)$ being given as
the right state in the Riemann problem, the curve
$W^B_2(h; h_R,u_R)$ defines all left states that can be
connected to the right state either by a 2--shock wave,
$h>h_R$ or by a 2--rarefaction wave, $h<h_R$.

The shock speed on each curve can be calculated by formula
\begin{equation}
  \label{rpl_eq.3}
  \begin{array}{ll}
    \sigma_1(h; h_L, u_L)=u_L-\sqrt{gh}\sqrt{\displaystyle\frac{1}{2}\left(1+\displaystyle\frac{h}{h_L}\right)},&h>h_L,\\
    \sigma_2(h; h_R, u_R)=u_R+\sqrt{gh}\sqrt{\displaystyle\frac{1}{2}\left(1+\displaystyle\frac{h}{h_R}\right)},&h>h_R
  \end{array}
\end{equation}
and the eigenvalues are given by
\begin{equation}
  \label{rpl_eq.4}
  \begin{array}{ll}
    \lambda_1(h;h_L,u_L)=\lambda_1(h_L,u_L)+3(\sqrt{gh_L}-\sqrt{gh}),&h<h_L,\\
    \lambda_2(h;h_R,u_R)=\lambda_2(h_R,u_R)+3(\sqrt{gh}-\sqrt{gh_R}),&h<h_R.
  \end{array}      
\end{equation}

For the case of terrain constant functions, the solution of
Riemann Problem for arbitrary data is a composite wave that
can be found following two steps:

{\bf Riemann Constructor. $(z, \theta)$ Constant function}

\begin{enumerate}
\item[{\bf Step 1}] {\it Find the intersection point $h_*$
    such that,}
\[
W^B_2(h_*;h_R,u_R)=W_1(h_*;h_L,u_L).
\]  
\item[{\bf Step 2}] {\it The composite wave curve of the
    solution for the Riemann problem is}
\[
W^B_2(h_R;W_1(h_*;h_L,u_L)).
\]  
\end{enumerate}

\begin{figure}[t]
  \centering
  \begin{tabular}{cc}
    \includegraphics[width=0.49\linewidth]{RP_constructor_case1.png}
    & \includegraphics[width=0.49\linewidth]{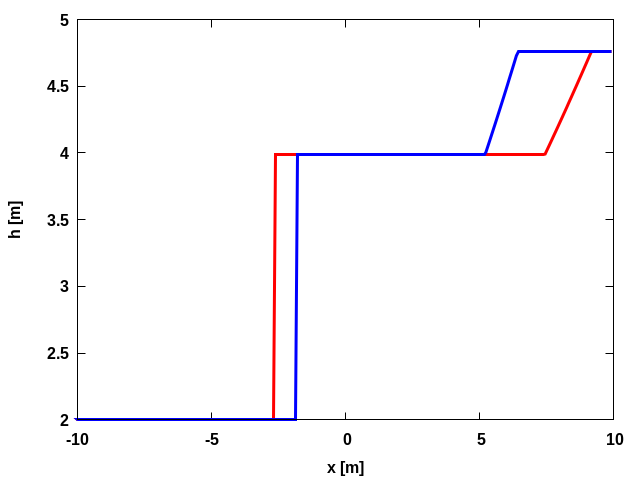}
  \end{tabular}
  \label{rp_fig.1}
  \caption{The figure illustrates an application of the
    construction algorithm to solve a Riemann Problem.  The
    left picture contains the 1-shock curve (green scatter
    plot), 1-rarefaction wave (green line), 2-backward shock
    (blue scatter plot) and 2-backward rarefaction wave
    (blue line).  The right picture contains the graphs of
    water height at two different moments of time
    $t=1{\rm s}$ (red line) and $t=0.7 {\rm s}$ (blue
    line).}
\end{figure}

\subsection{Riemann Constructor. Jump in $(z, \theta)$ }

For the case of jump in the terrain function, there are
three types of waves.  In addition to the first two waves
$W_1$, $W_2$, there is another steady shock wave $W_3$ that
results as a solution of the generalized Rankine-Hugoniot
equation, (\ref{swe.10}).  To build the third wave, it is
necessary to introduce a physical path that connects two
arbitrary states ${\cal P}^-:=(h^-,u^-,z^-,\theta^-)$ and
${\cal P}^+:=(h^+,u^+,z^+,\theta^+)$.  In the paper
(\cite{sds_ovidius}), we define a physical path by
\begin{equation*}
  \begin{split}
    h(s;h^-,h^+) & =h^-+\phi(s)(h^+-h^-),\\
    z(s;z^-,z^+) & =z^1+\phi(s)(z^+-z^-),\\
    \theta(s;\theta^-,\theta^+) & =\theta^-+
                                  \displaystyle\frac{\theta^-\phi(s)}{\theta^-\phi(s)+(1-\phi(s))\theta^+}(\theta^+-\theta^-),
  \end{split}
\end{equation*}
where $\phi(s)$ is an arbitrary smooth and monotone function
that satisfies $\phi(0)=0$ and $\phi(1)=1$.  Based on this
path, one can define the $W_3$--steady shock curve as
follows.

We introduce the notations
\[
\jumpp{z}=\frac{z^+-z^-}{h^-},\quad \theta=\frac{\theta^+}{\theta^-},\quad 
{\rm Fr}^2_-=\frac{(u^-)^2}{gh^-},
\]  
and the function
\[
\psi(y;\theta,\jumpp{z},{\rm Fr}_{-}):=-b(\theta)y^3-(a(\theta)-b(\theta)(1-\jumpp{z}))y^2+((1-\jumpp{z})a(\theta)-{\rm Fr}^2_-)y+
\frac{{\rm Fr}^2_-}{\theta},
\]
where $a(\theta)$ and $b(\theta)$ are given by
\[
b(\theta)=\frac{\theta(\theta-1-\theta\log{\theta})}{(\theta-1)^2}, \quad
  a(\theta)=-1-\frac{b(\theta)}{\theta}.
\]

\begin{definition}[3--wave]
  Given the terrain configuration $(z^-,z^+)$,
  $(\theta^-,\theta^+)$ and the left state $U^-=(h^-,u^-)$ a
  right state $U^+=(h^+,u^+)$ is defined by
  \begin{equation}
    \label{rpl_eq.6}
    \begin{split}
      h^+ & =hh^-,\\
      u^+ & =\displaystyle\frac{u^-}{\theta}\displaystyle\frac{1}{h},
    \end{split}
  \end{equation}
  where $h$ is the solution of the equation
  \begin{equation}
    \label{rpl_eq.7}
    \psi(x;\theta,\jumpp{z},{\rm Fr}_{-})=0
  \end{equation}
  that minimizes the function
  \begin{equation}
    \label{rpl_eq.8}
    {\cal E}(x)={\rm max}\{\left|\theta^{-1}-x\right|,\left|1-\jumpp{z}-x\right|\}.
  \end{equation}
  \label{def-3-wave}
\end{definition}

To understand the necessity of minimization criterion
required in Definition \ref{def-3-wave}, the following
remarks are in order.

Since the equation $\psi(x;\theta,\jumpp{z},{\rm Fr})=0$ can
have two positive solutions, it is necessary to introduce a
criterion to select a physically admissible solution.

When one solves the local problem, the selection of a
solution is based on the continuity argument, in the sense
that if the ratio $\theta$ and the soil surface jump
approach $1$ and $0$, respectively, then the left and right
states must be equal to each other.  In computations, one
can use as selection criterion the comparison of the left
Froude number with the unity, (see Theorem 2.1 in
\cite{sds_ovidius}).

But when we deal with large data, there is no guarantee that
selection based on the Froude number determines a solution
that goes to unity when the terrain data become continuous.

Here is an example.  Assume that $1-\jumpp{z}=1/\theta$.  In
such a case, there are two positive solutions:
\[
h_1=\displaystyle\frac{1}{\theta}, \quad
h_2=\displaystyle\frac{a(\theta)+\sqrt{a(\theta)^2-4b(\theta){\rm
      Fr}^2}}{-2b(\theta)}.
\]
Using the identity $a(\theta)+b(\theta)/\theta=-1$, one can
prove that if ${\rm Fr}^2<1/\theta$ then $h_2<h_1$.
Assuming that $\jumpp{z}<0$ and $1<{\rm Fr}^2<1/\theta$,
then the physical solution is $\beta=h_2$.  But
\[
\lim\limits_{\theta\rightarrow 1}h_2=\frac{-1+\sqrt{1+8{\rm Fr}^2}}{2}\neq 1
\]
On the other hand, the solution $h_1$ is obtained by a
continuous deformation of the solution $h_1=1$.

To overcome this problem, we introduce as a selection
criterion the minimization of the function ${\cal E}(x)$
that is a measure of the magnitude of the discontinuity in
free surface and fluid velocity.

There are two major difficulties encountered when building
the solution of the problem:

(a) the terrain jump equation can be unsolvable;

(b) along the wave curves $W_1$ and $W_2$, the eigenvalues
$\lambda_i$ change their sign, which implies that two
different states situated on the same wave curve but having
the eigenvalues with opposed sign can not be connected due
to the presence of the discontinuity line ($x=0$) that
separates the left and right states.

\begin{table}[t]
  \begin{tabular}{l|ccccc}
    ${\rm Fr}^2$&0&&$1/\theta+(a(\theta)+2)(1/\theta-(1-\jumpp{z}))$&&$\infty$\\\hline
    $\psi_2({\rm Fr})$&$-$&$\nearrow$&$1/\theta-(1-\jumpp{z})$&$\searrow$&$-$\\\hline
    $\partial_{{\rm Fr}^2}\psi_2$&\multicolumn{2}{c}{$+$}&$0$&\multicolumn{2}{c}{$-$}\\
      & \multicolumn{2}{c}{$\widetilde{h}_2({\rm Fr})<1/\theta $}&$\widetilde{h}_2({\rm Fr})=1/\theta$&\multicolumn{2}{c}{$\widetilde{h}_2({\rm Fr})>1/\theta$}\\\hline
  \end{tabular}
  \caption{The influence of the Froude number on the
    existence of the solution for the terrain jump equation.} 
\end{table}

The behavior of the solutions of the equation
$\psi(h,\theta, \jumpp{z},{\rm Fr})=0$ as functions of
${\rm Fr}$ is analyzed in the next lemma.  If
$\theta<1-\jumpp{z}$, there always are two positive
solutions for the terrain jump equation.  But in the case
$\theta>1-\jumpp{z}$, there are two critical numbers
${\rm Fr}_{*}$ and ${\rm Fr}^{*}$ such that if
$({\rm Fr}_{*})^2<{\rm Fr}^2< ({\rm Fr}^{*})^2$, then the
solution does not exist.
\begin{lemma} 
  \label{lemma-RP-01}
  Let $\theta$ and $\jumpp{z}$ be fixed. Let ${\rm Fr}$ be a
  variable parameter in the terrain jump equation
  $\psi(h; \theta,\jumpp{z},{\rm Fr})=0$.  Then, we can
  state the following properties about the solutions:

  {\rm (a)} If $1/\theta<1-\jumpp{z}$, then the terrain jump
  equation has two solutions $h_1<h_2$, for any Froude
  number, and
  \begin{equation}
    \label{RP_02}
    h_1<\frac{1}{\theta}<1-\jumpp{z}<h_2.
  \end{equation}

  {\rm (b)} If $1/\theta>1-\jumpp{z}$, then there are two
  critical Froude numbers ${\rm Fr}_{*}$ and ${\rm Fr}^{*}$,
  such that\\
  (i) if
  ${\rm Fr}^2\in \left(0, ({\rm Fr}_{*})^2\right)\cup
  \left(({\rm Fr}^{*})^2,\infty\right)$,
  then there are two solutions that satisfy the inequalities
  \begin{equation}
    \label{RP_03}
    \begin{array}{ll}
      h_1<h_2<1-\jumpp{z}<\displaystyle\frac{1}{\theta},
      &\quad {\rm if\,\,} {\rm Fr}^2<({\rm Fr}_{*})^2,\\
      1-\jumpp{z}<\displaystyle\frac{1}{\theta}<h_1<h_2,
      &\quad{\rm if\,\,} {\rm Fr}^2>({\rm Fr}^{*})^2;
    \end{array}
  \end{equation}
  (ii) if
  ${\rm Fr}^2\in \left(({\rm Fr}_{*})^2,({\rm
      Fr}^{*})^2\right)$, then there are no solutions.
\end{lemma}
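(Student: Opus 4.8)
The plan is to read $\psi$ as a cubic in $y$ whose coefficients depend affinely on the Froude parameter $F:={\rm Fr}^2_-$, and to extract the number and location of positive roots from a handful of sign evaluations together with an envelope argument in $F$. First I would split the cubic as $\psi(y)=P(y)+F\,Q(y)$, where $Q(y)=1/\theta-y$ collects all $F$--dependent terms and $P(y)$ is the remainder. The decisive preliminary fact is that $b(\theta)<0$ for $\theta\neq 1$: the auxiliary function $g(\theta)=\theta-1-\theta\log\theta$ has $g(1)=0$ and $g'(\theta)=-\log\theta$, so it attains its maximum $0$ at $\theta=1$ and is otherwise negative, whence $b(\theta)=\theta\,g(\theta)/(\theta-1)^2<0$. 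Hence the leading coefficient $-b(\theta)$ is positive, $\psi(+\infty)=+\infty$ and $\psi(-\infty)=-\infty$, and since $\psi(0)=F/\theta>0$ the cubic has exactly one negative root. Therefore it has either $0$ or $2$ positive roots, and the whole lemma reduces to counting and locating them.

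The second step exploits the identity $a(\theta)+b(\theta)/\theta=-1$ to factor $P(y)=-b(\theta)\,y\,(y-(1-\jumpp{z}))(y+a(\theta)/b(\theta))$ and to evaluate $\psi$ at the two landmark abscissae $y=1/\theta$ and $y=1-\jumpp{z}$. Using $Q(1/\theta)=0$ and $1/\theta+a(\theta)/b(\theta)=-1/b(\theta)$ one gets the $F$--independent value
\[
\psi(1/\theta)=\frac{1}{\theta}\Big(\frac{1}{\theta}-(1-\jumpp{z})\Big),
\]
while $P(1-\jumpp{z})=0$ gives $\psi(1-\jumpp{z})=F\big(1/\theta-(1-\jumpp{z})\big)$. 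In case (a), $1/\theta<1-\jumpp{z}$, both landmark values are negative whereas $\psi(0)>0$ and $\psi(+\infty)>0$; the sign pattern $+,-,-,+$ across $0<1/\theta<1-\jumpp{z}<+\infty$ forces one root in $(0,1/\theta)$ and one in $(1-\jumpp{z},+\infty)$, which is exactly \eqref{RP_02}, for every $F$. This settles (a) with no further work.

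The core is case (b), $1/\theta>1-\jumpp{z}$, where all landmark values are positive and one must decide whether $\psi$ dips below zero at all. I would track this through the local minimum of the cubic. Let $\widetilde h_2(F)$ be the larger critical point of $\psi(\cdot;F)$ (well defined since $-b(\theta)>0$), and set $\psi_2(F):=\psi(\widetilde h_2(F);F)$; the cubic has two positive roots precisely when $\psi_2(F)<0$ and none when $\psi_2(F)>0$ (both roots are automatically positive, as the unique negative root is already accounted for). The mechanism is the envelope identity: since $\partial_y\psi(\widetilde h_2;F)=0$, differentiation in $F$ yields $\psi_2'(F)=\partial_F\psi(\widetilde h_2;F)=Q(\widetilde h_2(F))=1/\theta-\widetilde h_2(F)$. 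Because $\partial_y\psi(y;F)=P'(y)-F$ with $P'$ an upward parabola, $\widetilde h_2(F)$ is the larger root of $P'(y)=F$ and is therefore increasing in $F$, crossing the level $1/\theta$ exactly once at $F=\partial_yP(1/\theta)$, which reduces to $1/\theta+(a(\theta)+2)(1/\theta-(1-\jumpp{z}))$ as in the table. Thus $\psi_2$ increases while $\widetilde h_2<1/\theta$, attains its maximum value $\psi(1/\theta)=\tfrac1\theta(1/\theta-(1-\jumpp{z}))>0$ when $\widetilde h_2=1/\theta$, and decreases thereafter.

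To close, I would verify $\psi_2<0$ at both ends: as $F\to\infty$ the minimiser behaves like $(F/3(-b(\theta)))^{1/2}$ and $\psi_2(F)\sim-\tfrac{2}{3}(3(-b(\theta)))^{-1/2}F^{3/2}\to-\infty$, while as $F\to0^+$ one has $\psi_2\to$ the (negative) local minimum of $P$ lying between its two largest roots. A continuous unimodal $\psi_2$ that is negative at both ends and positive at its peak has exactly two zeros ${\rm Fr}_*^2<{\rm Fr}^{*2}$; between them $\psi_2>0$ (no roots, part (b)(ii)), and outside them $\psi_2<0$ (two roots). The ordering in \eqref{RP_03} then follows from the branch information: on the increasing branch $\widetilde h_2<1/\theta$ the two roots bracket $\widetilde h_2$ and remain below $1-\jumpp{z}$ because $\psi(1-\jumpp{z})>0$, whereas on the decreasing branch $\widetilde h_2>1/\theta$ they lie above $1/\theta$ because $\psi$ stays positive throughout $(0,1/\theta)$. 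I expect the main obstacle to be precisely the case (b) analysis of $\widetilde h_2(F)$: rigorously establishing its monotonicity, the single crossing of $1/\theta$ (which hinges on the sign of $\partial_y^2\psi$ at the minimiser, i.e.\ on $1/\theta$ lying on the increasing branch of $P'$), and hence the genuine unimodality of $\psi_2$ rather than merely its sign at the landmark points.
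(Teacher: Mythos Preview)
Your argument is correct and follows essentially the same route as the paper: for (a) you evaluate $\psi$ at the landmarks $y=1/\theta$ and $y=1-\jumpp{z}$, and for (b) you track the local-minimum value $\psi_2(F)=\psi(\widetilde h_2(F);F)$ through the envelope identity $\partial_F\psi_2=1/\theta-\widetilde h_2$ together with the monotonicity $\partial_F\widetilde h_2>0$, exactly as in the paper's Table~1. You supply considerably more detail---the sign $b(\theta)<0$, the factorisation $P(y)=-b(\theta)\,y\,(y-(1-\jumpp z))(y+a/b)$, the explicit landmark values $\psi(1/\theta)=\tfrac1\theta(1/\theta-(1-\jumpp z))$ and $\psi(1-\jumpp z)=F(1/\theta-(1-\jumpp z))$, and the endpoint asymptotics of $\psi_2$---but the architecture is identical.
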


\begin{proof}
  The inequalities (\ref{RP_02}) are consequence of the
  property that if $1/\theta<1-\jumpp{z}$, then
  $\psi(1/\theta;\theta,\jumpp{z},{\rm Fr})<0$ and
  $\psi(1-\jumpp{z};\theta,\jumpp{z},{\rm Fr})<0$.

  To prove (\ref{RP_03}), we analyze the behavior of the
  minimum value of $\psi(h; \theta,\jumpp{z},{\rm Fr})$.
  Let $\widetilde{h}_2(\theta,\jumpp{z},{\rm Fr})$ be the
  positive solution of the equation
  $\partial_h\psi(h; \theta,\jumpp{z},{\rm Fr})=0$.  Let
  $\psi_2({\rm Fr}):=\psi(\widetilde{h}_2(
  \theta,\jumpp{z},{\rm Fr}); \theta,\jumpp{z},{\rm Fr})$.
  One has
  \[
  \partial_{{\rm Fr}^2}\psi_2({\rm Fr})=\displaystyle\frac{1}{\theta \widetilde{h}_2}-1.
  \]
  Since $\partial_{{\rm Fr}^2}\widetilde{h}_2>0$, one can
  draw the variation of $\psi_2({\rm Fr})$ as in the Table
  1. In the case $1/\theta>1-\jumpp{z}$, there is a value
  ${\rm Fr}_{*}$ such that $\psi_2({\rm Fr}_{*})=0$ and
  $\psi_2({\rm Fr})<0$ for any Froude number ${\rm Fr}$
  satisfying $({\rm Fr})^2<({\rm Fr}_{*})^2$.  For such a
  value of the Froude number, there are two solutions that
  are both smaller than $1/\theta$.  Taking into account
  that $\psi(h; \theta, \jumpp{z}, {\rm Fr})>0$, if
  $1-\jumpp{z}<h<1/\theta$, one can qconclude the first
  inequality in (\ref{RP_03}).  Similar arguments can be
  invoked to prove the second inequality in (\ref{RP_03}).
\end{proof}

\begin{lemma}
  \label{lemma-RP-02}
  Let $\theta$ and $\jumpp{z}$ be fixed.  Let ${\rm Fr}$ be
  a variable parameter in the terrain jump equation
  $\psi(h; \theta,\jumpp{z},{\rm Fr})=0$ and let
  ${\rm Fr}_{*}$ and ${\rm Fr}^{*}$ be the critical values
  given by Lemma~{\rm \ref{lemma-RP-01}}. Assume that the
  solutions $h_1<h_2$ of the equation
  $\psi(h,\theta, \jumpp{z},{\rm Fr})=0$ exist.  Then we can
  affirm:

  {\rm (a)} If $1/\theta>1-\jumpp{z}$, then
  \begin{equation}
    \label{RP_04}
    \begin{array}{ll}
      {\cal E}(h_2)<{\cal E}(h_1),&\,\,{\rm if\,\,} {\rm Fr}^2<({\rm Fr}_{*})^2,\\
      {\cal E}(h_1)<{\cal E}(h_2),&\,\,{\rm if\,\,} {\rm Fr}^2>({\rm Fr}^{*})^2.\\
    \end{array}
  \end{equation}

  {\rm (b)} If $1/\theta\leq 1-\jumpp{z}$, then there is a
  value $\widetilde{{\rm Fr}}$ with
  \begin{equation}
    \label{RP_05}
    \widetilde{{\rm Fr}}=\frac{1}{\theta}+\frac{-b(\theta)}{-b(\theta)+\theta}\left(
      1-\jumpp{z}-\frac{1}{\theta}\right),
  \end{equation}
  such that
  \begin{equation}
    \label{RP_06}
    \begin{array}{ll}
      {\cal E}(h_2)<{\cal E}(h_1)&\,\,{\rm if\,\,} {\rm Fr}^2<\widetilde{{\rm Fr}},\\
      {\cal E}(h_1)<{\cal E}(h_2)&\,\,{\rm if\,\,} {\rm Fr}^2>\widetilde{{\rm Fr}},\\
      {\cal E}(h_1)={\cal E}(h_2)&\,\,{\rm if\,\,} {\rm Fr}^2=\widetilde{{\rm Fr}}.
    \end{array}
  \end{equation}
  In addition, let ${\rm Fr}^2_{+}=(u^+)^2/gh^+$ be the
  Froude number defined by physical solution of the terrain
  jump equation. Then,
  \begin{equation}\label{RP_06-01}
    ({\rm Fr}^2-\widetilde{{\rm Fr}})({\rm Fr }_{+}^2-1)>0.
  \end{equation}
\end{lemma}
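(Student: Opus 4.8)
The plan is to reduce everything to the geometry of the selection functional. Writing $A:=1/\theta$, $B:=1-\jumpp{z}$ and $m:=(A+B)/2$, the key observation is that
\[
{\cal E}(x)=\max\{|A-x|,|B-x|\}=\tfrac12|A-B|+|x-m|,
\]
so ${\cal E}$ is V--shaped with vertex at the midpoint $m$, and for any two admissible heights ${\cal E}(h_1)<{\cal E}(h_2)$ holds iff $|h_1-m|<|h_2-m|$. Part (a) is then immediate from Lemma~\ref{lemma-RP-01}(b): when ${\rm Fr}^2<({\rm Fr}_{*})^2$ both roots lie below $B$, hence below $m$, and the larger root $h_2$ is the closer one; when ${\rm Fr}^2>({\rm Fr}^{*})^2$ both lie above $A$, hence above $m$, and the smaller root $h_1$ is closer. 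This gives (\ref{RP_04}).

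For part (b) I would first record the factorisation
\[
\psi(y)=-b(\theta)\,y\,(y-B)(y-C)+{\rm Fr}^2(A-y),\qquad C:=-\tfrac{a(\theta)}{b(\theta)}=A+\tfrac{1}{b(\theta)},
\]
obtained by isolating the linear ${\rm Fr}^2$--part and using $a(\theta)+b(\theta)/\theta=-1$ (it also recovers $\psi(A)=A(A-B)$ and $\psi(B)={\rm Fr}^2(A-B)$ at once). Because the ${\rm Fr}^2$--term is linear in $y$, the sum of the three roots equals $B+C$ independently of ${\rm Fr}^2$. In case (b) one has $h_1<A<B<h_2$ together with a third, negative root $r$; hence $h_1+h_2=A+B$ is equivalent to $r=(B+C)-(A+B)=1/b(\theta)$. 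Substituting $y=1/b(\theta)$ into $\psi=0$ (linear in ${\rm Fr}^2$) and solving returns exactly ${\rm Fr}^2=\widetilde{{\rm Fr}}$ in the form (\ref{RP_05}). Implicit differentiation of $\psi(r;{\rm Fr}^2)=0$ shows $r$ is strictly decreasing in ${\rm Fr}^2$ (at the negative root $\partial_y\psi>0$ while $\partial_{{\rm Fr}^2}\psi=A-r>0$), so $h_1+h_2=(B+C)-r$ is strictly increasing and crosses $A+B$ precisely at $\widetilde{{\rm Fr}}$; reading this through the distance-to-$m$ criterion yields (\ref{RP_06}).

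It then remains to prove (\ref{RP_06-01}). From $h=h^+/h^-$ and $u^+=u^-/(\theta h)$ one gets ${\rm Fr}_+^2={\rm Fr}^2/(\theta^2h^3)$, and eliminating ${\rm Fr}^2$ through $\psi(h)=0$ gives the clean identity
\[
{\rm Fr}_+^2-1=\frac{M(h)}{\theta^2h^2(h-A)},\qquad M(h):=\theta h^2(1-\theta h)-b(\theta)(h-B)(h-C).
\]
By (\ref{RP_06}) the physical root (the ${\cal E}$--minimiser of Definition~\ref{def-3-wave}) is $h_1\in(0,A)$ when ${\rm Fr}^2>\widetilde{{\rm Fr}}$ and $h_2\in(B,\infty)$ when ${\rm Fr}^2<\widetilde{{\rm Fr}}$; since $\mathrm{sgn}(h-A)$ is $-$ on the first branch and $+$ on the second, the sign of ${\rm Fr}_+^2-1$ equals that of $-M(h_1)$, respectively $+M(h_2)$. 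Thus $({\rm Fr}^2-\widetilde{{\rm Fr}})({\rm Fr}_+^2-1)>0$ follows as soon as $M<0$ at the physical roots, i.e.\ on $(0,A)$ and on $(B,\infty)$.

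This last step is the genuine obstacle. Descartes' rule allows $0$ or $2$ positive roots of $M$, and $M<0$ on $(A,B)$ together with $M(0)=a(\theta)B<0$, $M(A)=A-B\le0$, $M(B)=\theta B^2(1-\theta B)\le0$ must be upgraded to the strict two intervals. I would control this through two elementary logarithm inequalities, $\theta-1-\theta\log\theta\le0$ and $(\theta-1)-\log\theta\ge0$, which give $b(\theta)<0$ and $-b(\theta)\le\theta$, hence $C\le0$ and $|b|A=|b|/\theta\le1$. Writing $|b|=-b$ and $C=A-1/|b|$, on $(0,A)$ I replace $B-h$ by the smaller $A-h$ and keep $h-C=(h-A)+1/|b|$ exact, reducing $M<0$ to $\Phi(h)\le1$; on $(B,\infty)$ I replace $h-B$ by the larger $h-A$ and again keep $h-C$ exact, reducing it to $\Phi(h)\ge1$, where $\Phi(h):=\theta^2h^2-|b|h+|b|A$ is an upward parabola with $\Phi(A)=\theta^2A^2=1$ and vertex $|b|/(2\theta^2)\le A$. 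Monotonicity of $\Phi$ then yields $\Phi\le1$ on $(0,A)$ and $\Phi\ge1$ on $(A,\infty)\supseteq(B,\infty)$, closing the argument. The crux is thus the control of the quartic numerator $hM(h)$, and the decisive simplification is the collapse to the single quadratic $\Phi$ anchored at $\Phi(A)=1$, which makes the two sign facts about $b(\theta)$ do all the work.
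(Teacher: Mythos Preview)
Your argument is correct, and for parts (a) and the derivation of $\widetilde{{\rm Fr}}$ in (b) it is essentially the paper's proof, only with the distance-to-midpoint reformulation of ${\cal E}$ made explicit and with the monotonicity of $h_1+h_2$ justified through the third root $r$ rather than asserted via $\partial_{{\rm Fr}^2}{\cal E}(h_i)$. One small slip: $\Phi$ is not monotone on $(0,A)$ (its vertex $|b|/(2\theta^2)$ lies inside that interval); what you actually need---and what your endpoint values $\Phi(0)=|b|A\le1$, $\Phi(A)=1$ give---is that the parabola's maximum on $[0,A]$ is attained at the endpoints, hence $\Phi\le1$ there. The conclusion stands.

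Where your route genuinely diverges from the paper is the proof of \eqref{RP_06-01}. You eliminate ${\rm Fr}^2$ through $\psi=0$, obtain the identity ${\rm Fr}_+^2-1=M(h)/(\theta^2h^2(h-A))$, and then push through a sign analysis of the cubic $M$ on $(0,A)$ and $(B,\infty)$ by comparison with the quadratic $\Phi$ anchored at $\Phi(A)=1$; this is elegant but laborious. The paper instead first records the sandwich $1/\theta\le\widetilde{{\rm Fr}}\le1-\jumpp{z}$ (immediate from \eqref{RP_05} since $0\le\frac{-b}{-b+\theta}\le1$, which is exactly your $|b|\le\theta$), and then argues directly from ${\rm Fr}_+^2={\rm Fr}^2/(\theta^2h^3)$: if ${\rm Fr}^2<\widetilde{{\rm Fr}}$ the physical root is $h_2>1-\jumpp{z}\ge\widetilde{{\rm Fr}}$ and $h_2>1/\theta$, so
\[
{\rm Fr}_+^2=\frac{{\rm Fr}^2}{\theta^2h_2^3}<\frac{\widetilde{{\rm Fr}}}{\theta^2h_2^3}<\frac{h_2}{\theta^2h_2^3}=\frac{1}{(\theta h_2)^2}<1,
\]
and symmetrically if ${\rm Fr}^2>\widetilde{{\rm Fr}}$ then $h_1<1/\theta$ gives $\theta^2h_1^3<1/\theta\le\widetilde{{\rm Fr}}<{\rm Fr}^2$, hence ${\rm Fr}_+^2>1$. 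This four-line chain replaces your entire $M/\Phi$ analysis; your identity, however, has the merit of making the dependence on $h$ fully explicit and would be reusable for finer estimates on ${\rm Fr}_+$.
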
 

\begin{proof}
  The inequalities (\ref{RP_04}) immediately result from the
  definition of ${\cal E}(h)$ and (\ref{RP_03}).  To prove
  the inequalities (\ref{RP_06}), we proceed as follows.  We
  observe that $\partial_{{\rm Fr}^2}{\cal E}(h_1)<0$ and
  $\partial_{{\rm Fr}^2}{\cal E}(h_2)>0$.  Moreover, there
  is a value of ${\rm Fr}$ such that
  ${\cal E}(h_1)={\cal E}(h_2)$.  This equality implies that
  \[
  h_1+h_2=\frac{1}{\theta}+1-\jumpp{z}.
  \]
  The above equality  allows us to calculate the negative solution
  $h_3=1/b(\theta)$ of the equation
  $\psi(h,\theta, \jumpp{z},{\rm Fr})=0$. Then,
  \[
  {\rm Fr}^2\left(\frac{b(\theta)}{\theta}-1\right)+
  \frac{b(\theta)}{\theta}\left(\frac{1}{b(\theta)}-(1-\jumpp{z})\right)=0.
  \]
  This proves (\ref{RP_05}).

  Note that $\widetilde{{\rm Fr}}$ satisfies the following
  inequalities
  \[
  \frac{1}{\theta}\leq\widetilde{{\rm Fr}}\leq 1-\jumpp{z}. 
  \]
  To prove (\ref{RP_06-01}) we observe that if
  ${\rm Fr}^2<\widetilde{{\rm Fr}}$, then (\ref{RP_06})-1 and
  (\ref{RP_02}) imply that the physical solution, $h_2$, is
  greater than $1-\jumpp{z}$. One has
  \[
  {\rm Fr}_{+}^2(\theta;\jumpp{z};{\rm Fr}):=\frac{{\rm Fr}^2}{\theta^2 h_2^3}<
  \frac{\widetilde{{\rm Fr}}}{\theta^2 h_2^3}<\frac{ h_2}{\theta^2 h_2^3}<
  \frac{ 1}{\theta^2 h_2^2}<1.
  \]  
  In the case ${\rm Fr}^2>\widetilde{{\rm Fr}}$, then
  (\ref{RP_06})-2 and (\ref{RP_02}) imply that the physical
  solution, $h_1$, is less than $1/\theta$ and inequality
  (\ref{RP_06-01}) can be proven in a similar way.
\end{proof}

\subsection{Dam-break problem}

A very interesting Riemann Problem is the dam-break.  Here,
we consider an extended dam-break problem in which we also
have a soil surface jump.  It can be formulated as
\begin{equation}
  \label{db_eq.01}
  \begin{array}{l}
    u_L=0,\quad u_R=0;\\
    h_L+z_L>h_R+z_R. 
  \end{array}
\end{equation}  

We seek a solution of the problem that is defined by a
composite wave.  In the presence of a jump in one of terrain
function, the composite wave must include a 3-wave since it
is the only wave that supports a jump in a terrain function.
Also, in the case of dam-break problem, the composite wave
must include a 1-rarefaction wave issuing from the left
state $U_L$ and ending at a point $U$ with
$\lambda_1(U)\leq 0$ and this $U$ must be an admissible
state for a 3-wave. It follows that it is essential to
investigate the composite wave $W_3(W_1(h;h_L,u_L)$, where
$W_1(h;h_L,u_L)$ is restricted to the segment
$\lambda_1(W_1(h;h_L,u_L))\leq 0$, for $h<h_L$.

For this purpose, we consider that the terrain functions
data $\theta_R$, $\theta_L$, $z_R$ and $z_L$ are frozen
and we study what happens with the solution when the
hydrodynamic data $h_L$ and $h_R$ take different values.

\def\rtheta{\theta} 
We denote by $h_L^{\#}$ the value of $h$
where the rarefaction 1-wave issuing from $(h_L,u_L)$ with
${\rm Fr}_L^2<1$ intersects the curve ${\rm Fr}(u,h)=1$
\begin{equation}
  \label{RP-notation.1}
  h_L^{\#}=h_L\frac{({\rm Fr}_L+2)^2}{9},\quad 
  u_L^{\#}=\sqrt{{\rm g}h_L^{\#}}.
\end{equation}
Here, we use the notations $\jumpp{z}=z_R-z_L$ and
$\rtheta=\theta_R/\theta_L$.  As in Lemma~\ref{lemma-RP-02},
we introduce
\begin{equation}
  \label{RP-notation.2}
  \widetilde{{\rm Fr}}(h) = 
  \frac{1}{\rtheta}+
  \frac{-b(\rtheta)}{-b(\rtheta)+\rtheta}
  \left(
    1-\frac{|z|}{h}-\frac{1}{\rtheta}
  \right).
\end{equation}

The curve $W_3(W_1(h;h_L,u_L))$ can be defined for $h$ close
enough to $h_L$ (${\rm Fr}_L=0$!) but it is questionable
whether it can be defined for any $h_L^{\#}<h<h_L$.

The next proposition provides sufficient conditions on $h_L$
to guaranty that the curve $W_3(W_1(h;h_L,u_L))$ is well
defined for any $h_L^{\#}<h<h_L$ and also describes
properties of this curve.  It is based on the fact that
there are some circumstances that allow to solve the terrain
jump equation for any Froude number, see
Lemma~\ref{lemma-RP-01}.

\begin{proposition}
  \label{RP-prop.1}
  Suppose that:\\
  \underline{{\rm Case a:}} $\theta_R>\theta_L$, $\jump{z}>0$ and
  \begin{equation}
    \label{RP.07}
    h_L^{\#}>(z_R-z_L)\frac{\theta_R}{\theta_R-\theta_L}.
  \end{equation}
  \underline{{\rm Case b:}} $\theta_R>\theta_L$, $\jump{z}<0$,
  (without restrictions on $h_L$).\\
  \underline{{\rm Case c:}} $\theta_R<\theta_L$, $\jump{z}<0$
  and
  \begin{equation}
    \label{RP.08}
    h_L<(z_R-z_L)\frac{\theta_R}{\theta_R-\theta_L}.
  \end{equation}

  Then:\\
  {\rm (1)} In all three cases, one can define the curve
  3--wave $W_3(W_1(h; h_L,u_L))$, for
  $h_L^{\#}\leq h \leq h_L$.\\
  {\rm (2)} $W_3(W_1(h; h_L,u_L))$, for
  $h_L^{\#}\leq h \leq h_L$ is a disconnected curve composed
  by two continuous branches, one with Froude number greater
  than one and the other one with Froude number smaller than
  one, in the {\rm Case a} and in the {\rm Case b} provided
  that
  \begin{equation}
    \label{RP.09}
    \widetilde{\rm Fr}(h_L^{\#})<1.
  \end{equation}
  {\rm (3)} $W_3(W_1(h; h_L,u_L))$, for
  $h_L^{\#}\leq h \leq h_L$ is a continuous curve with
  Froude number smaller than one in {\rm Case c} and in {\rm
    Case b} provided that
  \begin{equation}
    \label{RP.10}
    \widetilde{\rm Fr}(h_L^{\#})>1.
  \end{equation}
\end{proposition}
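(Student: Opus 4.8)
The plan is to read Lemmas~\ref{lemma-RP-01} and \ref{lemma-RP-02} pointwise along the restricted rarefaction. First I would parametrize the admissible part of $W_1(h;h_L,u_L)$ by $h\in[h_L^{\#},h_L]$: along it $u+2\sqrt{gh}$ is constant, the local state is $(h,u_1(h;h_L,u_L))$, and for the dam-break data ($u_L=0$) its Froude number is ${\rm Fr}^2(h)=4\big(\sqrt{h_L/h}-1\big)^2$, which decreases monotonically in $h$ from ${\rm Fr}^2(h_L^{\#})=1$ down to ${\rm Fr}^2(h_L)=0$. At each such $h$ the $3$--wave uses $(h,u_1)$ as left state with terrain ratio $\theta=\theta_R/\theta_L$ and normalized jump $(z_R-z_L)/h$, the quantity that plays the role of $\jumpp{z}$ in the lemmas, so Lemmas~\ref{lemma-RP-01}–\ref{lemma-RP-02} apply with these $h$--dependent data.

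For assertion~(1) I would check that the hypothesis $1/\theta<1-(z_R-z_L)/h$ of Lemma~\ref{lemma-RP-01}(a) holds on all of $[h_L^{\#},h_L]$. Rewriting it as $(z_R-z_L)/h<(\theta_R-\theta_L)/\theta_R$ and inserting the signs of $z_R-z_L$ and $\theta_R-\theta_L$, it becomes $h>(z_R-z_L)\theta_R/(\theta_R-\theta_L)$ in Case~a (secured by (\ref{RP.07}) since $h\ge h_L^{\#}$), an automatic inequality in Case~b (negative left side, positive right side), and $h<(z_R-z_L)\theta_R/(\theta_R-\theta_L)$ in Case~c (secured by (\ref{RP.08}) since $h\le h_L$). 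In every case the terrain jump equation then has two roots $h_1<h_2$ for all Froude numbers, so the physical root exists at each point and $W_3(W_1(h;h_L,u_L))$ is defined throughout $[h_L^{\#},h_L]$.

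For (2)–(3) the selection rule of Lemma~\ref{lemma-RP-02}(b) drives everything: the physical root is $h_2$ (subcritical after-state, by (\ref{RP_06-01})) when ${\rm Fr}^2(h)<\widetilde{\rm Fr}(h)$ and $h_1$ (supercritical after-state) when ${\rm Fr}^2(h)>\widetilde{\rm Fr}(h)$. Since $h_1<1/\theta<1-(z_R-z_L)/h<h_2$ by (\ref{RP_02}) leaves a strict gap, at any $h_c$ with ${\rm Fr}^2=\widetilde{\rm Fr}$ one has ${\cal E}(h_1)={\cal E}(h_2)$ and the selected root jumps from $h_2$ to $h_1$, so the curve is genuinely discontinuous there. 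Hence everything reduces to counting sign changes of $D(h):={\rm Fr}^2(h)-\widetilde{\rm Fr}(h)$: one sign change yields the disconnected curve of~(2) (subcritical branch near $h_L$ where $D<0$, supercritical branch near $h_L^{\#}$ where $D>0$), and $D<0$ throughout yields the single continuous subcritical curve of~(3). In Case~a, $z_R-z_L>0$ forces $\widetilde{\rm Fr}(h)\le 1-(z_R-z_L)/h<1$ (the upper bound in Lemma~\ref{lemma-RP-02}(b)), so $D(h_L^{\#})>0>D(h_L)$; moreover ${\rm Fr}^2$ decreases while $\widetilde{\rm Fr}$ increases in $h$ (the coefficient of $-(z_R-z_L)/h$ in (\ref{RP-notation.2}) being positive since $b(\theta)<0$), so $D$ is strictly monotone with exactly one zero --- disconnected. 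In Case~c, $\theta<1$ gives $\widetilde{\rm Fr}(h)\ge 1/\theta>1\ge{\rm Fr}^2(h)$, so $D<0$ throughout --- continuous subcritical.

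The main obstacle is Case~b, where $\theta>1$ and $z_R-z_L<0$ make \emph{both} ${\rm Fr}^2(h)$ and $\widetilde{\rm Fr}(h)$ decrease in $h$, so the endpoint signs of $D$ alone do not fix the number of crossings. To resolve this I would compute
\[
D'(h)=\frac{1}{h^2}\Big[4\sqrt{h_L}\big(\sqrt{h}-\sqrt{h_L}\big)-c\,(z_R-z_L)\Big],\qquad c=\frac{-b(\theta)}{-b(\theta)+\theta}\in(0,1),
\]
and note that the bracket is strictly increasing in $h$; hence $D$ is first decreasing then increasing and admits no interior local maximum, so its maximum on $[h_L^{\#},h_L]$ is attained at an endpoint. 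Under (\ref{RP.10}), $D(h_L^{\#})=1-\widetilde{\rm Fr}(h_L^{\#})<0$ and $D(h_L)<0$ then force $D<0$ everywhere (continuous subcritical curve, assertion~(3)); under (\ref{RP.09}), $D(h_L^{\#})>0>D(h_L)$ together with the absence of an interior maximum force exactly one crossing (disconnected curve, assertion~(2)). Finally, on each side of a crossing the selected root is a simple root of the cubic $\psi$ depending continuously on $h$, so each branch is continuous, which completes the description. The delicate estimate is precisely this monotonicity of $D'$ in Case~b, where the explicit form of $\widetilde{\rm Fr}$ and the sign $b(\theta)<0$ are genuinely used.
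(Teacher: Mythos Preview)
Your argument is correct and follows the same outline as the paper: part~(1) reduces to checking the hypothesis $1/\theta<1-\jumpp{z}$ of Lemma~\ref{lemma-RP-01}(a) along $[h_L^{\#},h_L]$, and parts~(2)--(3) come from Lemma~\ref{lemma-RP-02}(b), the bounds $1/\theta\le\widetilde{\rm Fr}\le 1-\jumpp{z}$, and the fact that ${\rm Fr}^2(h)$ and $\widetilde{\rm Fr}(h)$ meet at most once on $[h_L^{\#},h_L]$. The one substantive addition is that the paper merely \emph{asserts} this single--intersection property, whereas you supply a proof via the explicit formula $D'(h)=h^{-2}\bigl[4\sqrt{h_L}(\sqrt{h}-\sqrt{h_L})-c\,(z_R-z_L)\bigr]$ and the monotonicity of the bracket; this is genuinely needed in Case~b (both ${\rm Fr}^2$ and $\widetilde{\rm Fr}$ decreasing) and fills a gap the paper leaves open.
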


\begin{proof} 
  (1). If the conditions (\ref{RP.07}) and (\ref{RP.08}) are
  satisfied, then one can use Lemma~\ref{lemma-RP-01}--a.
  To prove (2) and (3), we use Lemma~\ref{lemma-RP-02}--b,
  the estimation for $\widetilde{{\rm Fr}}(h_L^{\#})$,
  \[
  \widetilde{{\rm Fr}}(h_L^{\#}) <1-\frac{\jump{z}}{h_L^{\#}}<1,
  \]
  in Case a and 
  \[
  \widetilde{{\rm Fr}}(h_L^{\#}) >\frac{1}{\rtheta}>1,
  \]
  in Case c, combined with the property that the functions
  $\widetilde{{\rm Fr}}(h)$ and ${\rm Fr}(h)^2$ have at most
  only one intersection point on the interval
  $[h_L^{\#},h_L]$.
\end{proof}

The remaining case, $\theta_R<\theta_L$ and $\jump{z}<0$, is
less complicated than the ones analyzed in
Proposition~\ref{RP-prop.1} and $W_3(W_1(h; h_L,u_L))$ can
be completely described.

\begin{proposition}
  \label{RP-prop.2}
  Suppose that $\theta_R<\theta_L$ and $\jump{z}>0$. There
  is a critical value $h^\#< h_c\leq h_L$ such that the
  following properties hold:\\
  {\rm (1)} The curve 3--wave $W_3(W_1(h; h_L,u_L))$ can be
  defined for $h_c\leq h \leq h_L$ and it is connected.\\
  {\rm (2)} Moreover,
  \begin{equation}
    \label{RP_10}
    Fr_+(W_3(W_1(h; h_L,u_L)))\leq Fr_+(W_3(W_1(h_c;
    h_L,u_L))), \quad h_c\leq h \leq h_L,
  \end{equation}
  where $Fr_+ := \theta^{-1} \beta^{-3/2} Fr_-$.\\
  {\rm (3)} The curve 3--wave $W_3(W_1(h; h_L,u_L))$ does
  not exist for $h^\#\leq h <h_c$.
\end{proposition}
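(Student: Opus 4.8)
The plan is to reduce Proposition~\ref{RP-prop.2} to a one–parameter study of the terrain jump equation, obtained by letting the left state of the 3–wave run along the restricted 1–rarefaction, and then to feed this family into Lemmas~\ref{lemma-RP-01} and \ref{lemma-RP-02}, keeping in mind that both the left Froude number and the normalized terrain jump move with $h$. Concretely, along $W_1(h;h_L,u_L)$ with $u_L=0$ and $\lambda_1\le 0$ (that is, $h_L^{\#}\le h\le h_L$) one has $u_1(h)=2(\sqrt{gh_L}-\sqrt{gh})$, so the left Froude number of the 3–wave attached at $h$ is
\[
{\rm Fr}_-(h)^2=\frac{u_1(h)^2}{gh}=4\left(\sqrt{\tfrac{h_L}{h}}-1\right)^2 ,
\]
a strictly decreasing function of $h$ equal to $1$ at $h=h_L^{\#}$ and to $0$ at $h=h_L$, while the normalized jump seen by the 3–wave is $(z_R-z_L)/h=|z|/h>0$, also decreasing in $h$. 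Since $\rtheta<1$ and $|z|/h>0$, we have $1/\rtheta>1>1-|z|/h$ for every admissible $h$, so we are uniformly in the regime of Lemma~\ref{lemma-RP-01}--(b): for each $h$ there is a forbidden interval $\big(({\rm Fr}_{*}(h))^2,({\rm Fr}^{*}(h))^2\big)$ of left Froude numbers outside which the terrain jump equation has two solutions and inside which it has none.

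The core step is to locate the boundary of solvability. I would set
\[
G(h):={\rm Fr}_-(h)^2-({\rm Fr}_{*}(h))^2 ,
\]
where ${\rm Fr}_{*}(h)$ is the lower critical value of Lemma~\ref{lemma-RP-01}--(b) for the data $(\rtheta,|z|/h)$. At $h=h_L$ one has ${\rm Fr}_-=0$, hence $G(h_L)<0$ and the equation is solvable, with the physical root selected by Lemma~\ref{lemma-RP-02}--(a) being the hydrostatic value $\beta=1-|z|/h_L$ on the lower branch $h_1<h_2<1-|z|/h<1/\rtheta$. At the other end $h=h_L^{\#}$ one has ${\rm Fr}_-=1$, and I would show ${\rm Fr}_{*}(h_L^{\#})^2<1$, so that $G(h_L^{\#})>0$ and the equation is unsolvable near $h_L^{\#}$. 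Continuity then forces a sign change; defining $h_c$ as the zero of $G$ and proving that $G$ changes sign exactly once gives a well–defined critical value with $h^{\#}<h_c\le h_L$. The single–crossing claim is the delicate point: although ${\rm Fr}_-^2$ is monotone in $h$, the threshold $({\rm Fr}_{*}(h))^2$ also moves with $h$ through $|z|/h$, so uniqueness requires controlling the dependence of ${\rm Fr}_{*}$ on the normalized jump, read off from the relation $\psi_2=0$ of Lemma~\ref{lemma-RP-01}.

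Granting the dichotomy at $h_c$, assertions (1) and (3) follow. For $h_c<h\le h_L$ we have $G(h)<0$, so Lemma~\ref{lemma-RP-01}--(b) gives two solutions and the physical root $\beta=h_2(h)$ selected by Lemma~\ref{lemma-RP-02}--(a) depends continuously on $h$; hence $W_3(W_1(h;h_L,u_L))$ is a single continuous branch, which is assertion~(1). For $h^{\#}\le h<h_c$ we have $G(h)>0$, i.e.\ ${\rm Fr}_-(h)^2>({\rm Fr}_{*}(h))^2$; since on the rarefaction ${\rm Fr}_-^2\le 1$ while the upper critical value satisfies $({\rm Fr}^{*}(h))^2>1/\rtheta>1$ (the abscissa of the maximum of $\psi_2$ in Table~1 is $1/\rtheta+(a(\rtheta)+2)(1/\rtheta-(1-|z|/h))>1/\rtheta$, because $a(\rtheta)+2>0$ as $b(\rtheta)<0$ for $\rtheta<1$, and ${\rm Fr}^{*}$ lies to its right), the value ${\rm Fr}_-(h)^2$ falls in the forbidden interval and no solution exists. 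This proves~(3).

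Finally, for the monotonicity~(2) I would work with ${\rm Fr}_+^2(h)={\rm Fr}_-(h)^2/(\rtheta^2\beta(h)^3)$ and differentiate in $h$ on $[h_c,h_L]$, using the implicit relation $\psi(\beta(h);\rtheta,|z|/h,{\rm Fr}_-(h))=0$ to express $d\beta/dh$. As $h$ decreases toward $h_c$ two effects reinforce each other: ${\rm Fr}_-^2$ increases, and the physical root $\beta=h_2$ decreases (it starts at $\beta=1-|z|/h_L$ for ${\rm Fr}_-=0$ and descends toward the merge value $\widetilde{h}_2(h_c)<1-|z|/h_c$ as ${\rm Fr}_-$ grows), so ${\rm Fr}_+^2$ increases. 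Hence ${\rm Fr}_+^2$ is nonincreasing in $h$ and attains its maximum at the left endpoint $h_c$, which is exactly~(\ref{RP_10}). The main obstacle of the whole argument is the single–crossing analysis of $G$ that pins down $h_c$ and guarantees $h_c>h^{\#}$; once the behaviour of ${\rm Fr}_{*}(h)$ as a function of the normalized jump is understood, connectedness, nonexistence below $h_c$, and the monotonicity of ${\rm Fr}_+$ all follow as above.
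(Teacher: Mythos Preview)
Your plan is essentially the paper's own: locate ${\rm Fr}_-(h)$ relative to the forbidden window $({\rm Fr}_*,{\rm Fr}^*)$ of Lemma~\ref{lemma-RP-01}(b), check $({\rm Fr}^*)^2>1$ via the abscissa $\eta=1/\theta+(a(\theta)+2)(1/\theta-(1-\jumpp{z}))>1/\theta$ of the maximum of $\psi_2$ (you do this correctly, and so does the paper), and for (2) differentiate ${\rm Fr}_+={\rm Fr}_-/(\theta\beta^{3/2})$ implicitly in $h$ to obtain $\partial_h{\rm Fr}_+<0$ from $\partial_h{\rm Fr}_-<0$ and $\partial_h\beta>0$.

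The one step you defer, $({\rm Fr}_*(h))^2<1$, is precisely the computational core of the paper's proof of (1). The paper obtains it by showing that the local minimum $\psi_2(1)$ of $\psi(\,\cdot\,;\theta,\jumpp{z},1)$ is positive: first it checks that $\partial_y\psi(1-\jumpp{z};\theta,\jumpp{z},1)=-b(\theta)(1-\jumpp{z})^2-a(\theta)(1-\jumpp{z})-1<0$ (this reduces, via monotonicity of $y\mapsto -b y^2-ay-1$ on $(0,\infty)$ and the identity $-b-a-1=b(1/\theta-1)$, to $b(\theta)<0$), whence $\widetilde h_2(1)>1-\jumpp{z}$; then, writing $\psi_2(1)=1/\theta-\widetilde h_2(1)-\widetilde h_2(1)\big(\widetilde h_2(1)-(1-\jumpp{z})\big)\big(a(\theta)+b(\theta)\widetilde h_2(1)\big)$ and using $\widetilde h_2(1)<1/\theta$ together with $a(\theta),b(\theta)<0$, positivity follows. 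You should fill this in; everything else in your outline matches the paper.

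Your worry about the single crossing of $G$ is legitimate: the paper does not prove it either. It simply \emph{defines} $h_c$ as the infimum of $h\in(h_L^{\#},h_L]$ for which the terrain equation is solvable with left state $W_1(h;h_L,u_L)$, and relies on continuity of the selected root $\beta(h)=h_2(h)$ (and the monotonicity $\partial_h\beta>0$ established for (2)) for the connectedness claim. So on this point you are at least as careful as the paper.
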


\begin{proof}
  {\rm (1)} In the case $1/\theta>1>1-\jumpp{z}$, we apply
  Lemma \ref{lemma-RP-01}--a to show that
  \[
  {\rm Fr}_{*}<1<{\rm Fr}^{*}.
  \]
  Let $\eta= 1/\theta+(a(\theta)+2)(1/\theta-(1-\jumpp{z}))$
  be the value of ${\rm Fr}^2$, where the positive root
  $\widetilde{h}_2({\rm Fr})$ of the derivative of
  $\partial \psi(h; \theta,\jumpp{z},{\rm Fr})/\partial y$
  equals to $1/\theta$.  From $a(\theta)+2>0$ results that
  $\eta>1$, hence ${\rm Fr}^{*}>1$ and
  $\widetilde{h}_2(1)<1/\theta$.  To show that
  ${\rm Fr}_{*}<1$, it is sufficient to prove that the
  minimum $\psi_2(1)$ of $\psi(y;\theta;\jumpp{z}, 1)$ is
  positive.  One has
  \[
  \psi_2(1)=\frac{1}{\theta}-\widetilde{h}_2(1)-\widetilde{h}_2(1)
  \left(\widetilde{h}_2(1)-(1-\jumpp{z})\right)\left(a(\theta)+
    b(\theta)\widetilde{h}_2(1)\right)
  \]
  Since $\widetilde{h}_2(1)<1/\theta$, $a(\theta)<0$ and
  $b(\theta)<0$, it is sufficient that
  $\widetilde{h}_2(1)>(1-\jumpp{z})$ to prove the inequality
  $\psi_2(1)>0$.

  Taking into account that $\widetilde{h}_2(1)$ is the
  greatest root of the quadratic equation
  $\partial_y\psi(y;\theta,\jumpp{z},1)=0$, the inequality
  holds if
  $\partial_y\psi(1-\jumpp{z};\theta,\jumpp{z},1)<0$.  One
  has
  \[
  \partial_y\psi(1-\jumpp{z};\theta,\jumpp{z},1)=
  -b(\theta)\left(1-\jumpp{z}\right)^2-a(\theta)
  \left(1-\jumpp{z}\right)-1.
  \]
  The function
  \[
  f(y):=-b(\theta)y^2-a(\theta) y-1=0
  \]
  is monotone increasing on the interval $(0,\infty)$.
  Since $0<1-\jumpp{z}<1$, then
  \[
  f(0)<f(1-\jumpp{z})<f(1).
  \]
  Conequently,
  \[
  -1<f(1-\jumpp{z})<-b(\theta)-a(\theta)-1.
  \]
  As
  $-b(\theta)-a(\theta)-1=
  -b(\theta)+b(\theta)/\theta=b(\theta)(1/\theta-1)<0$,
  it results that
  \[
  f(1-\jumpp{z})<0.
  \]
  So, the root $\widetilde{h}_2(1)>1-\jumpp{z}$.\medskip

  {\rm (2)} The inequality (\ref{RP_10}) can be proved as
  follows.  The obtain a point $W_3(W_1(h;h_L;u_L))$, one
  must solve the terrain equation when the left state is the
  point $W_1(h;h_L;u_L)$.  Let $h_{c}$ be the minimum value
  of $h$ on the interval $(h^{\#}_L, h_L)$ for which the
  terrain equation is solvable having the point
  $W_1(h;h_L;u_L)$ as left state.  We show that on the curve
  $W_3(W_1(h;h_L;u_L))$ the Froude number is a monotone
  decreasing function of $h$.  Let $W_1(h;h_L;u_L)$ be a
  left state in the terrain equation, ${\rm Fr}_{-}(h)$ be
  the Froude number of the left state and let $\beta(h)$ be
  the solution of the equation.  Using $\beta(h)$, we define
  the 3-wave $W_3(W_1(h;h_L;u_L))$.  Let ${\rm Fr}_{+}(h)$
  be the Froude number of $W_3(W_1(h;h_L;u_L))$.  One has
  \[
  {\rm Fr}_{+}(h)=\frac{{\rm Fr}_{-}(h)}{\theta
    \beta(h)^{3/2}}
  \]
  and then,
  \[
  \partial_h{\rm Fr}_{+}(h)=\frac{\partial_h{\rm
      Fr}_{-}(h)}{\theta \beta(h)^{3/2}}- 3/2\frac{{\rm
      Fr}_{-}(h)}{\theta \beta(h)^{5/2}}\partial_h\beta(h).
  \]
  It is relatively easy to show that on the curve
  $W_1(h;h_L;u_L)$, one has $\partial_h{\rm Fr}_{-}(h)<0$.

  We show that $\partial_h\beta(h)$ is a negative function,
  too.  Using a standard implicit function theorem, we can
  write
  \begin{equation*}
    \begin{split}
      0=\psi(\beta(h);\theta,\jumpp{z}(h),{\rm Fr}_{-}(h))&=
      \partial_y\psi(y;\theta,\jumpp{z}(h),{\rm
        Fr}_{-}(h))\Big|_{y=\beta(h)}\cdot\partial_h\beta(h)+\\
      &+\phi({\beta(h);\theta,\jumpp{z}(h),{\rm
          Fr}_{-}(h)}),
    \end{split}
  \end{equation*}
where we used the notation
\begin{equation*}
  \begin{split}
    \phi(\beta(h);\theta,\jumpp{z}(h),{\rm Fr}_{-}(h)):=&\\
    +&\partial_{\jumpp{z}}\psi(\beta(h);\theta,\jumpp{z}(h),{\rm Fr}_{-}(h))\cdot\partial_h\jumpp{z}(h)+\\
    +&\partial_{{\rm
        Fr}_{-}}\psi(\beta(h);\theta,\jumpp{z}(h),{\rm
      Fr}_{-}(h))\cdot\partial_h{{\rm Fr}_{-}(h)}.
  \end{split}
\end{equation*}
We have,
\begin{equation*}
  \begin{array}{l}
    \phi(\beta(h);\theta,\jumpp{z}(h),{\rm Fr}_{-}(h))=\\\\
    =\displaystyle 
    \frac{\jumpp{z}(h)}{h}\beta(h)(a(\theta)+b(\theta)\beta(h))+
    2{\rm Fr}_{-}(h)
    (1/\theta-\beta(h))\partial_h{\rm Fr}_{-}(h)<0.
  \end{array}
\end{equation*}
For the last inequality, we used that $a(\theta)$ and
$b(\theta)$ are negative functions, $1/\theta-\beta(h)>0$
(see Lemma \ref{lemma-RP-02}--a) and
$\partial_h{\rm Fr}_{-}(h)<0$.  Using a standard implicit
function theorem, we can write
\[
\partial_h\beta(h)=-\partial_h\psi(\beta(h);\theta,\jumpp{z}(h),{\rm
  Fr}_{-}(h))/\psi(\beta(h);\theta,\jumpp{z}(h),{\rm Fr}_{-}(h))>0,
\]
consequently,
\[
\partial_h{\rm Fr}_{+}(h)<0.
\]
\end{proof}

In all cases that are not covered by the
Proposition~\ref{RP-prop.1}, we can not say in advance if
the point $U_L^{\#}$ is an admissible state for $W_3$ or if
its Froude number is greater or smaller than one.

The general strategy to solve the dam-break problem is to
find out if the backward wave $W^B_2(U_R)$ intersects a
segment of the composite wave $W_3(W_1(h;U_L))$.  In the
case of negative answer, we try to interpose a 1--wave
between $W_3(W_1)$ and $W_2$.

We will show that there are three different structures of
the composite waves that can solve the dam-break problem.
All three algorithms are effective for the more general
Riemann Problem (than the dam-break problem) but restricted
to ${\rm Fr}_L^2<1$ and ${\rm Fr}_R^2<1$.

\subsubsection{Constructive Algorithms}
\begin{samepage}
  {\bf Riemann Constructor. Type I}
  \begin{enumerate}
  \item[{\bf Step 1}] {\it Build the curve}
    \[
    W_3(W_1(h;h_L,u_L);\theta,\jump{z}).
    \]
  \item[{\bf Step 2}] {\it Find the intersection point $h_*$
      such that}
    \[
    W_2^B(h_*;h_R,u_R)=W_3(W_1(h_*;h_L,u_L);\theta,\jump{z}).
    \]
  \item[{\bf Step 3}] {\it The composite wave curve of the
      solution for the Riemann problem is}
    \[
    W_2(h_R;W_3(W_1(h_*;h_L,u_L);\theta,\jump{z})).
    \]
  \end{enumerate}
\end{samepage}
\begin{figure}[!ht]
  \centering
  \begin{tabular}{cc}
    \multicolumn{2}{c}
    {
    \begin{tabular}{cc}
      &\small{$\mathbf{\jump{z}=-0.2\quad\theta_R/\theta_L=0.5}$}\\
      \begin{turn}{90}
        \hspace{20mm}\small{${u \;  [m/s]}$}
      \end{turn}
      &\includegraphics[width=0.5\linewidth]{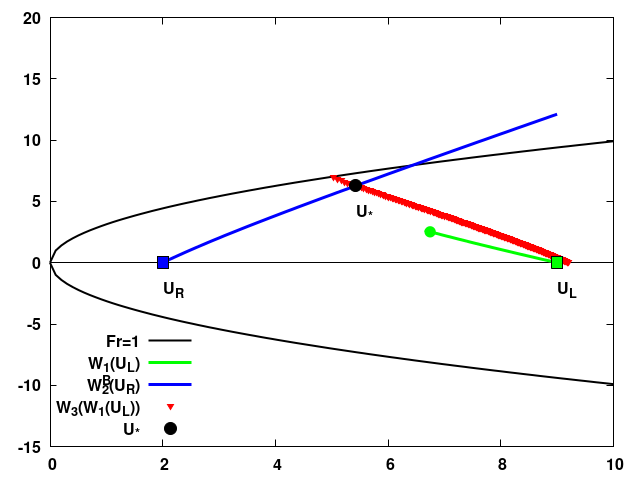}\\
      &\small{${h \; [m]}$}
    \end{tabular}
    }\\
    \begin{tabular}{cc}
      &\small{{\bf Free water surface, $\mathbf {t=0.7s}$}}\\
      \begin{turn}{90}
        \hspace{20mm}\small{${h \; [m]}$}
      \end{turn}
      &\includegraphics[width=0.4\linewidth]{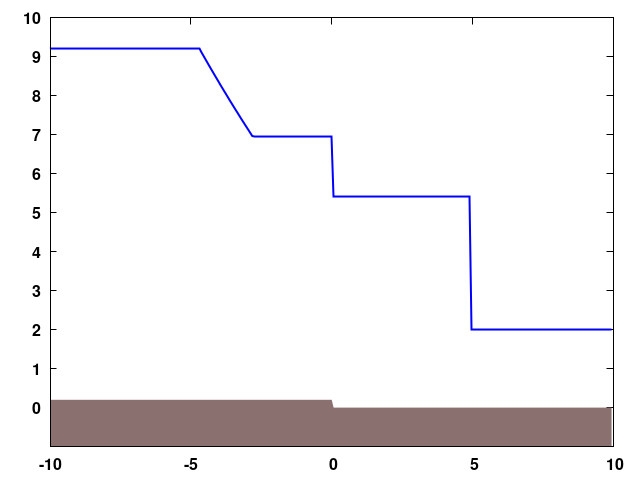}\\
      &\small{${x \; [m]}$}
    \end{tabular}
    &
    \begin{tabular}{cc}
      &\small{{\bf Water velocity, $\mathbf{t=0.7s}$}}\\
      \begin{turn}{90}
        \hspace{20mm}\small{${u \; [m/s]}$}
      \end{turn}
      &\includegraphics[width=0.4\linewidth]{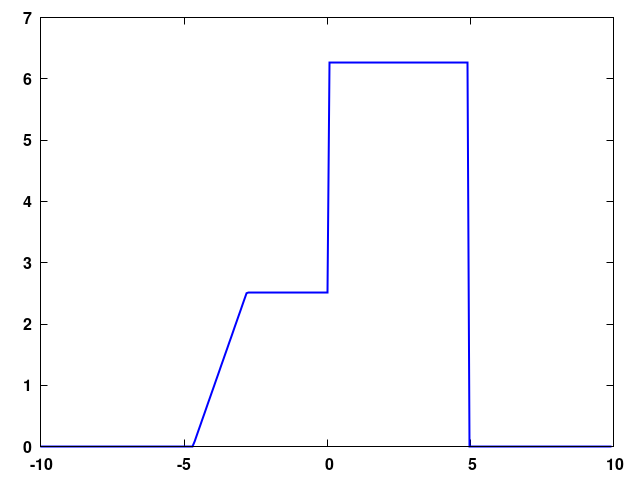}\\
      &\small{${x \; [m]}$}
    \end{tabular}\\
  \end{tabular}
  \caption{Type~I solution of Riemann Problem: the phase
    portrait (on top), the water level (left bottom) and the
    water velocity (right bottom).}
  \label{fig_typeI}
\end{figure}
In Step 1, we build the maximal connected curve that
includes $h=h_L$.  It really exists if ${\rm Fr}_L=0$ or if
the jumps in terrain functions are small enough.  The
algorithm furnishes a solution only if the intersection
point $h_{*}$ searched in {\it Step 2} exists, otherwise
this algorithm does not provide a solution.  A typical
pattern of solution is illustrated in
Figure~\ref{fig_typeI}.
\medskip

{\bf Riemann Constructor. Type II}
\begin{samepage}
  \begin{enumerate}
  \item[{\bf Step 1}] {\it Build the curve}
    \[
    W_3(W_1(h;h_L,u_L);\theta,\jump{z}).
    \]
  \item[{\bf Step 2}] {\it Find the intersection point $U_2$
      and $h_1$ such that}
    \[
    U_2=W_3(W_1(h_1;h_L,u_L);\theta,\jump{z}),\quad {\rm
      Fr}(U_2)=1.
    \]
  \item[{\bf Step 3}] {\it Find the intersection point
      $h_*<h_2$ such that}
    \[
    W_2^B(h_*;h_R,u_R)=W_1(h_*;W_3(W_1(h_1;h_L,u_L);\theta,\jump{z})).
    \]
  \item[{\bf Step 4}] {\it The composite wave curve of the
      solution for the Riemann Problem is}
    \[
    W_2(h_R;W_1(h_*;W_3(W_1(h_*;h_L,u_L);\theta,\jump{z}))).
    \]
  \end{enumerate}
\end{samepage}
This algorithm works only if there is a point on the curve
$W_3(W_1(\cdot;\cdot)$ with Froude number equal to one.  A
solution of this type exits if the point
$W_3(W_1(h_1;U_L);\theta,\jump{z})$ is below the curve
$W_2^B(h;h_R,u_R)$, for $h<h_R$.  A typical pattern of
solution is illustrated in Figure~\ref{fig_typeII}.
\begin{figure}[t]
  \centering
  \begin{tabular}{cc}
    \multicolumn{2}{c}
    {
    \begin{tabular}{cc}
      &\small{$\mathbf{\jump{z}=-0.2\quad\theta_R/\theta_L=0.5}$}\\
      \begin{turn}{90}
        \hspace{20mm}\small{${u \;  [m/s]}$}
      \end{turn}
      &\includegraphics[width=0.5\linewidth]{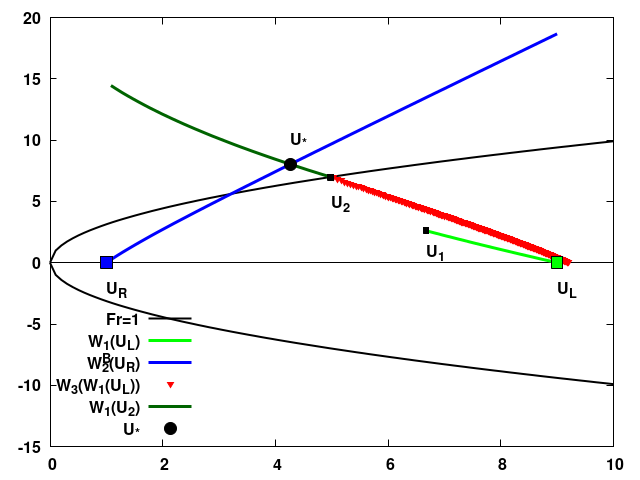}\\
      &\small{${h \; [m]}$}
    \end{tabular}
    }\\
    \begin{tabular}{cc}
      &\small{{\bf Free water surface, $\mathbf {t=0.7s}$}}\\
      \begin{turn}{90}
        \hspace{20mm}\small{${h \; [m]}$}
      \end{turn}
      &\includegraphics[width=0.4\linewidth]{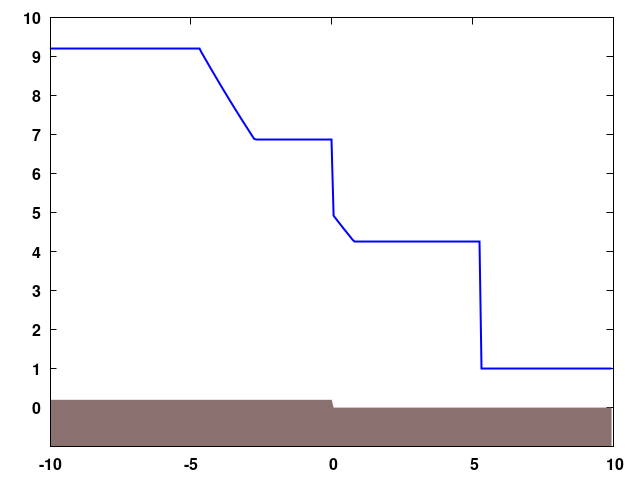}\\
      &\small{${x \; [m]}$}
    \end{tabular}
    &
    \begin{tabular}{cc}
      &\small{{\bf Water velocity, $\mathbf{t=0.7s}$}}\\
      \begin{turn}{90}
        \hspace{20mm}\small{${u \; [m/s]}$}
      \end{turn}
      &\includegraphics[width=0.4\linewidth]{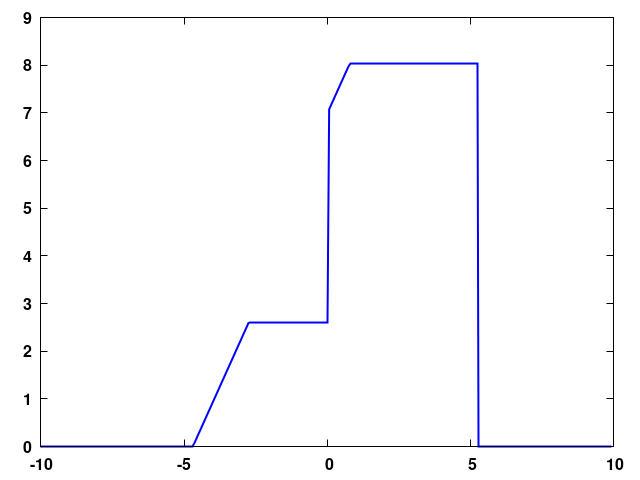}\\
      &\small{${x \; [m]}$}
    \end{tabular}\\
  \end{tabular}
  \caption{Type~II solution of Riemann Problem: the phase
    portrait (on top), the water level (left bottom) and the
    water velocity (right bottom).}
  \label{fig_typeII}
\end{figure}
\medskip

{\bf Riemann Constructor. Type III}
\begin{samepage}
  \begin{enumerate}
  \item[{\bf Step 1}] {\it Find the intersection point $h_1$
      such that}
    \[
    {\rm Fr}(W_1(h_1;h_L,u_L))=1.
    \]
  \item[{\bf Step 2}] {\it Find the point $(h_2,u_2)$ such
      that}
    \[
    W_3(W_1(h_1;h_L,u_L);\theta,\jump{z}))=(h_2,u_2).
    \]
  \item[{\bf Step 3}] {\it Find the intersection point $h_3$
      such that}
    \[
    W_2^B(h_3;h_R,u_R)=W_1(h_3;h_2,u_2).
    \]
  \item[{\bf Step 4}] {\it The composite wave curve of the
      solution for the Riemann problem is}
    \[
    W_2(h_R;W_1(h_3;W_3(W_1(h_1;h_L,u_L);\theta,\jump{z}))).
    \]
  \end{enumerate}
\end{samepage}
The algorithm can be used only if $U_2$ exists and
${\rm Fr}(U_2)>1$.  The Proposition~\ref{RP-prop.1} offers
results to guaranty that the algorithm of Type~III makes
sense.  To have a solution, one needs to verify that the
point $h_3$ exists and if it is also verified the inequality
$h_3>h_2$, then the shock speed $\sigma_1(h_3; h_2,u_2$ must
be positive.  A typical pattern solution is illustrated in
Figure~\ref{fig_typeIII}.
\begin{figure}[t]
  \centering
  \begin{tabular}{cc}
    \multicolumn{2}{c}
    {
    \begin{tabular}{cc}
      &\small{$\mathbf{\jump{z}=0.2\quad\theta_R/\theta_L=2}$}\\
      \begin{turn}{90}
        \hspace{20mm}\small{${u \;  [m/s]}$}
      \end{turn}
      &\includegraphics[width=0.5\linewidth]{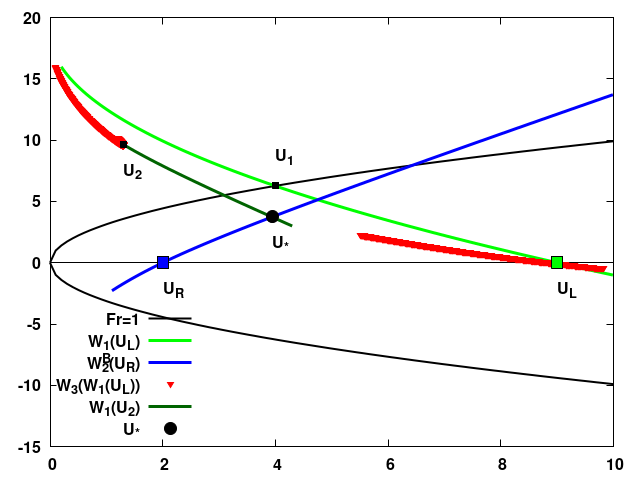}\\
      &\small{${h \; [m]}$}
    \end{tabular}
    }\\
    \begin{tabular}{cc}
      &\small{{\bf Free water surface, $\mathbf {t=0.7s}$}}\\
      \begin{turn}{90}
        \hspace{20mm}\small{${h \; [m]}$}
      \end{turn}
      &\includegraphics[width=0.4\linewidth]{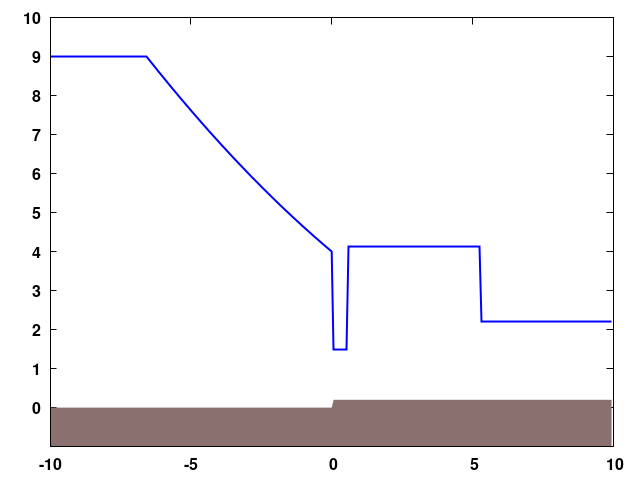}\\
      &\small{${x \; [m]}$}
    \end{tabular}
    &
    \begin{tabular}{cc}
      &\small{{\bf Water velocity, $\mathbf{t=0.7s}$}}\\
      \begin{turn}{90}
        \hspace{20mm}\small{${u \; [m/s]}$}
      \end{turn}
      &\includegraphics[width=0.4\linewidth]{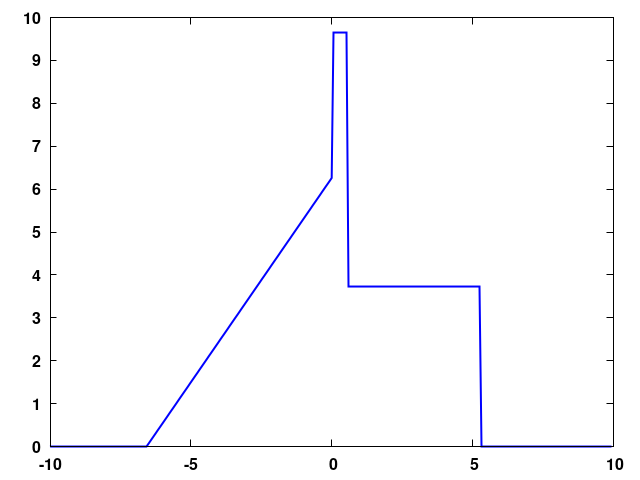}\\
      &\small{${x \; [m]}$}
    \end{tabular}\\
  \end{tabular}
  \caption{Type~III solution of Riemann Problem: the phase
    portrait (on top), the water level (left bottom) and the
    water velocity (right bottom).}
  \label{fig_typeIII}
\end{figure}

\subsubsection{On the existence of the solution of the
  dam-break problem}
We will provide some sufficient conditions to have a
solution by composite waves of the dam-break problem.  We
will also indicate the cases where the solution by composite
wave is not possible.  We assume that the terrain data
$\{z_L, z_R, \theta_L, \theta_R\}$ and $h_L$ are frozen.  We
will use the notation $h_{\rm max}= h_L+z_L-z_R$, $h_L^{\#}$
and $\widetilde{{\rm Fr}}(h)$ given by (\ref{RP-notation.1})
and (\ref{RP-notation.2}), respectively, the critical number
$h_c$ whose existence was proved in proposition
\ref{RP-prop.2}.

The following theorem gathers together the results
formulated in Propositions \ref{RP-prop.1}, \ref{RP-prop.2}
and in the Constructive Algorithms (of type I, II, III) of
the solution for the dam-break problem.

\begin{theorem}
  Let $\{z_L, z_R, \theta_L, \theta_R\}$
  and $h_L$ be given.  We assume that:\\
  \underline{{\rm Case a:}}
  \begin{equation}
    \label{RP.07-1}
    \theta_R>\theta_L, \quad z_R>z_L,\quad h_L+z_L>z_R, \quad
    h_L^{\#}>(z_R-z_L)\displaystyle \frac{\theta_R}{\theta_R-\theta_L}.
  \end{equation}
  There is a value $\widetilde{h}$ with
  $h_L^{\#}<\widetilde{h}<h_L$ such that one can build the
  curve $W_3(W_1(h;h_L,0))$ for
  $\widetilde{h}\leq h \leq h_L$, the point
  $U_2=W_3(U_L^{\#})$ and the 1-wave $W_1(h;U_2)$ for
  $h<h_0$, ($\sigma_1(h_0;U_2)=0$).  In this case, there are
  two values $\xi^a_1<\xi^a_2<h_{\rm max}$ satisfying the
  properties:
  \begin{enumerate}
  \item For $h_R<\xi^a_1$, $W^B_2(h_R,0)$ intersects
    $W_1(h;U_2)$ and the solution is given by algorithm III;
  \item For $\xi^a_1<h_R<\xi^a_2$, there is no solution;
  \item For $\xi^a_2<h_R<h_{\rm max}$, $W^B_2(h_R,0)$
    intersects $W_3(W_1(h;h_L,0))$ and the solution is given
    by algorithm I.
  \end{enumerate}
  \underline{{\rm Case b:}}
  \begin{equation}
    \label{RP.08-1}
    \theta_R>\theta_L,\quad z_R<z_L.
  \end{equation}
  Depending on the value of $\widetilde{\rm
    Fr}(h_L^{\#})$, one has:\\
  \underline{{\rm Case b1:}}
  \begin{equation}
    \label{RP.08-2}
    \widetilde{\rm Fr}(h_L^{\#})<1.
  \end{equation}
  There is a value $\widetilde{h}$ with
  $h_L^{\#}<\widetilde{h}<h_L$ such that one can build the
  curve $W_3(W_1(h;h_L,0))$ for
  $\widetilde{h}\leq h \leq h_L$, the point
  $U_2=W_3(U_L^{\#})$ and the 1-wave $W_1(h;U_2)$ for
  $h<h_0$, ($\sigma_1(h_0;U_2)=0$).  In this case, there are
  two values $\xi^{b1}_1<\xi^{b1}_2<h_{\rm max}$ satisfying
  the properties:
  \begin{enumerate}
  \item For $h_R<\xi^{b1}_1$, $W^B_2(h_R,0)$ intersects
    $W_1(h;U_2)$ and the solution is given by algorithm III;
  \item For $\xi^{b1}_1<h_R<\xi^{b1}_2$, there is no
    solution;
  \item For $\xi^{b1}_2<h_R<h_{\rm max}$, $W^B_2(h_R,0)$
    intersects $W_3(W_1(h;h_L,0))$ and the solution is given
    by algorithm I.
  \end{enumerate}
  \underline{{\rm Case b2:}}
  \begin{equation}
    \label{RP.08-3}
    \widetilde{\rm Fr}(h_L^{\#})>1.
  \end{equation}
  One can build the curve $W_3(W_1(h;h_L,0))$ for
  $h_L^{\#}\leq h \leq h_L$. In this case, there is one
  value $\xi^{b2}<h_{\rm max}$ with the following
  properties:
  \begin{enumerate}
  \item For $h_R<\xi^{b2}$ there is no solution;
  \item For $\xi^{b2} \leq h_R<h_{\rm max}$, $W^B_2(h_R,0)$
    intersects $W_3(W_1(h;h_L,0))$ and the solution is given
    by algorithm I.
  \end{enumerate}
  \underline{{\rm Case c:}}
  \begin{equation}
    \label{RP.09-1}
    \theta_R<\theta_L, \quad z_R<z_L, \quad
    h_L<(z_R-z_L)\displaystyle
    \frac{\theta_R}{\theta_R-\theta_L}.
  \end{equation}
  One can build the curve $W_3(W_1(h;h_L,0))$ for
  $h_L^{\#}\leq h \leq h_L$.  In this case, there is one
  value $\xi^c<h_{\rm max}$ with the following properties:
  \begin{enumerate}
  \item For $h_R<\xi^c$ there is no solution;
  \item For $\xi^c \leq h_R<h_{\rm max}$, $W^B_2(h_R,0)$
    intersects $W_3(W_1(h;h_L,0))$ and the solution is given
    by algorithm I.
  \end{enumerate}
  \underline{{\rm Case d:}}
  \begin{equation}
    \label{RP.10-1}
    \theta_R<\theta_L, \quad z_R>z_L.
  \end{equation}
  There is a value $h_c$ with $h_L^{\#}<h_c<h_L$ such that
  one can build the curve $W_3(W_1(h;h_L,0))$ for
  $h_c\leq h \leq h_L$.  Depending of the value of
  $Fr_+(W_3(W_1(h_c; h_L,u_L)))$, one has:\\
  \underline{{\rm Case d1:}}
  \begin{equation}
    \label{RP.10-2}
    Fr_+(W_3(W_1(h_c; h_L,u_L)))<1
  \end{equation}
  In this case, there is one value $\xi^{d1}<h_{\rm max}$
  with properties:
  \begin{enumerate}
  \item For $h_R<\xi^{d1}$ there is no solution;
  \item For $\xi^{d1} \leq h_R <h_{\rm max}$, $W^B_2(h_R,0)$
    intersects $W_3(W_1(h;h_L,0))$ and the solution is given
    by algorithm I;
  \end{enumerate}
  \underline{{\rm Case d2:}}
  \begin{equation}
    \label{RP.10-3}
    Fr_+(W_3(W_1(h_c; h_L,u_L)))>1
  \end{equation}
  There is a value $h_c$, where $h_c<\widetilde{h}<h_L$,
  with $Fr_+(W_3(W_1(\widetilde{h}; h_L,u_L)))=1$ and one
  builds the curve $W_3(W_1(h;h_L,0))$ for
  $\widetilde{h}\leq h \leq h_L$, the point
  $U_1=W_3(W_1(\widetilde{h};h_L,0) )$ and the 1-wave
  $W_1(h;U_1)$ for $h<h_1$.  In this case, there is
  $\xi^{d2}<h_{\rm max}$ satisfying the properties:
  \begin{enumerate}
  \item For $h_R<\xi^{d2}$, $W^B_2(h_R,0)$ intersects
    $W_1(h;U_1))$ and the solution is given by algorithm II;
  \item For $\xi^{d2}\leq h \leq h_{\rm max}$,
    $W^B_2(h_R,0)$ intersects $W_3(W_1(h;h_L,0))$ and the
    solution is given by algorithm I.
  \end{enumerate}
\end{theorem}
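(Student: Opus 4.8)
The plan is to treat the statement as a synthesis theorem: in each case I would first freeze the geometry of the fundamental composite curve $\Gamma:=W_3(W_1(h;h_L,0))$ by citing the matching earlier result, and only then read off the admissible range of the right datum $h_R$. The correspondence I would set up is: Cases a and b are governed by Proposition~\ref{RP-prop.1} (the hypotheses (\ref{RP.07}) and $z_R<z_L$ being exactly what forces $1/\theta<1-\jumpp{z}$ on the whole range $[h_L^{\#},h_L]$, so that Lemma~\ref{lemma-RP-01}--a applies and $\Gamma$ exists there), Case c by Proposition~\ref{RP-prop.1}--Case c (condition (\ref{RP.08})), and Case d by Proposition~\ref{RP-prop.2}. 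Thus the first, purely transcriptive, task in every case is to record whether $\Gamma$ is connected or splits into a supersonic and a subsonic branch, which is decided by the sign of $\widetilde{\rm Fr}(h_L^{\#})-1$ through Lemma~\ref{lemma-RP-02}--b and Proposition~\ref{RP-prop.1}(2)--(3).

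The analytic engine, common to all cases, is a monotone–foliation argument that I would organize around a scalar landing map. Since $u_L=u_R=0$, the terminal $1$--rarefaction issuing from $U_L$ has ${\rm Fr}_L=0$, so $\Gamma$ is well defined near $h=h_L$ and continues down to $h_L^{\#}$ (respectively $h_c$) as in the cited propositions. I would define $\Phi$ to assign to each point $P\in\Gamma$ the water height at which the forward $2$--wave emanating from $P$ reaches the axis $u=0$; equivalently $\Phi(P)=h_R$ iff $W_2^B(\,\cdot\,;h_R,0)$ passes through $P$, which is precisely the intersection condition in Steps~2--3 of each algorithm. Because the backward $2$--wave curves $\{W_2^B(\cdot;h_R,0)\}$ foliate the relevant part of the half–plane $u\le0$ and are ordered in $h_R$, the map $\Phi$ is continuous and strictly monotone along each connected branch of $\Gamma$, so its image is an interval whose endpoints are the threshold values $\xi$. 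The common right endpoint is anchored by noting $\Phi\to h_{\rm max}$ as $P\to U_L$: there the dam-break drop $h_L+z_L-(h_R+z_R)$ vanishes and the profile degenerates to the bare steady jump at $x=0$.

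With $\Phi$ in hand the case split becomes geometric. When $\Gamma$ is connected and subsonic (Cases b2, c, d1) $\Phi$ is monotone on all of $\Gamma$ and its image is a single interval $(\xi,h_{\rm max})$, so Algorithm~I applies there and no composite solution exists for $h_R<\xi$. When $\Gamma$ is disconnected (Cases a and b1, where $\widetilde{\rm Fr}(h_L^{\#})<1$ forces the split of Proposition~\ref{RP-prop.1}(2)), the subsonic branch gives the upper interval $(\xi_2,h_{\rm max})$ solved by Algorithm~I, while the supersonic branch together with the forward $1$--wave $W_1(\cdot;U_2)$ from $U_2=W_3(U_L^{\#})$ gives the lower interval $(0,\xi_1)$ solved by Algorithm~III; that $U_2$ is indeed supersonic follows from (\ref{RP_06-01}) evaluated at the sonic height $h_L^{\#}$ (${\rm Fr}^2=1>\widetilde{\rm Fr}$ forces ${\rm Fr}_+^2>1$). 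The $u$--gap between the two branches that the $1$--wave cannot cover is exactly the set $h_R\in(\xi_1,\xi_2)$ with no solution, yielding $\xi_1<\xi_2<h_{\rm max}$. In Case d2 the Froude--one crossing is interior: by the monotonicity of $Fr_+$ along $\Gamma$ in Proposition~\ref{RP-prop.2}(2) its maximum is attained at $h_c$, so $Fr_+(h_c)>1$ produces a unique $\widetilde h\in(h_c,h_L)$ with $Fr_+(\widetilde h)=1$; since $\Gamma$ stays connected here, appending $W_1(\cdot;U_1)$ from $U_1=W_3(W_1(\widetilde h;h_L,0))$ supplies Algorithm~II below $\xi^{d2}$ and Algorithm~I above, with no intervening gap.

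The hard part will not be the bookkeeping but two admissibility verifications that legitimize each algorithm and that underpin the monotonicity of $\Phi$. First I must check the wave–speed orderings along the composite profile: the terminal $1$--rarefaction must end with $\lambda_1\le0$ so the steady $3$--shock sits correctly at $x=0$, the inserted forward $1$--wave in Algorithms~II and III must carry positive speed (the condition $\sigma_1(h_3;h_2,u_2)>0$ flagged after Algorithm~III), and the closing $2$--wave must be right–moving; these I would extract from the explicit formulas (\ref{rpl_eq.3})--(\ref{rpl_eq.4}) together with the sign law (\ref{RP_06-01}) of Lemma~\ref{lemma-RP-02}, which couples the position of $Fr_+$ relative to $1$ with that of ${\rm Fr}^2$ relative to $\widetilde{\rm Fr}$. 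Second, and most delicate, is establishing the \emph{strict} monotonicity and transversality that make $\Phi$ invertible on each branch; this requires quantifying the foliation property of the $2$--wave family against the branch structure of $\Gamma$, using $\partial_h Fr_+<0$ from Proposition~\ref{RP-prop.2} and $1/\theta-\beta(h)>0$ from Lemma~\ref{lemma-RP-02}--a. Once these are secured, the orderings $\xi_1<\xi_2<h_{\rm max}$ and the single–threshold assertions follow by the intermediate value theorem applied to $\Phi$ on each branch.
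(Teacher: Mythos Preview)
Your proposal is sound and aligned with the paper's approach, though it goes considerably further: the paper offers no proof beyond the prefatory remark that the theorem ``gathers together the results formulated in Propositions~\ref{RP-prop.1}, \ref{RP-prop.2} and in the Constructive Algorithms.'' Your case--to--proposition correspondence is exactly right, and your landing--map device $\Phi$ together with the foliation/monotonicity argument is the analytic mechanism needed to convert the geometric descriptions of $\Gamma$ into threshold statements about $h_R$; the paper leaves all of this implicit.

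Two minor points. First, the relevant half--plane for the $W_2^B$ foliation in the dam--break configuration is $u\ge0$, not $u\le0$: along $W_1(h;h_L,0)$ for $h<h_L$ one has $u_1>0$, and $W_3$ preserves the sign of $u$, so the intermediate states on $\Gamma$ carry positive velocity. Second, your anchoring claim $\Phi\to h_{\max}$ as $P\to U_L$ is correct and worth making explicit in the write--up: at ${\rm Fr}_-=0$ the terrain cubic factors as $y(by+a)\bigl(1-\jumpp{z}-y\bigr)$, the physical root (by Lemma~\ref{lemma-RP-02}--b with ${\rm Fr}^2=0<\widetilde{\rm Fr}$) is $\beta=1-\jumpp{z}$, and hence $h^+=\beta\,h_L=h_L-(z_R-z_L)=h_{\max}$, while $u^+=0$, so the forward $2$--wave is trivial and $\Phi(U_L)=h_{\max}$.
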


\section{Conclusion}
In this paper, we provide certain analytical solutions of
Riemann Problem for shallow water equations with topography
and vegetation when the jump of the initial data is
arbitrary large.  We introduced three algorithms that allow
to solve the dam-break problem.  Each algorithm provides a
set of elementary waves that are combined to obtain the
solution to the dam-break problem.  This algorithmic
procedure can be extended to solve the general Riemann
Problem.
\medskip

\end{document}